\newtheorem{theorem}{Theorem}[section]
\newtheorem{lemma}[theorem]{Lemma}
\newtheorem{definition}[theorem]{Definition}
\newtheorem{remark}[theorem]{\it Remark}
\newtheorem{example}[theorem]{Example}
\newtheorem{proposition}[theorem]{Proposition}
\def\C{\mathbb{C}}
\def\R{\mathbb{R}}
\def\Z{\mathbb{Z}}
\def\del{\partial}
\def\tree{\mathcal{T}}
\DeclareMathOperator{\Pic}{Pic} 
\DeclareMathOperator{\Hom}{\mathrm{Hom}}
\begin{document}

\title[Principal bundles and the $K$-Pieri rule]{Cox rings of moduli of quasi parabolic principal bundles and the $K$-Pieri rule}
\author{ Christopher Manon}

%\title{}
%\author{}

\begin{abstract}
We study a toric degeneration of the Cox ring of the moduli of quasi principal $\mathrm{SL}_m(\C)$ bundles on a marked projective line in the case where the parabolic data is chosen in the stabilizer of the highest weight vector in $\C^m$ or its dual representation $\bigwedge^{m-1}(\C^m)$.  The result of this degeneration is an affine semigroup algebra which is naturally related to the combinatorics of the $K$-Pieri rule from Kac-Moody representation theory. We find that this algebra is normal and Gorenstein, with a quadratic square-free Gr\"obner basis.  This implies that the Cox ring is Gorenstein and Koszul for generic choices of markings, and generalizes results of Castravet, Tevelev and Sturmfels, Xu. Along the way we describe a relationship between the Cox ring and a classical invariant ring studied by Weyl. 
\end{abstract}

\maketitle

\tableofcontents

\noindent
Keywords: conformal blocks, Pieri rule, toric degeneration, invariant theory

\section{Introduction}

The first fundamental theorem of invariant theory describes the algebra $R(a, b)$ of $\mathrm{SL}_m(\C)$-invariant polynomial functions on the affine space $(\bigwedge^{m-1}(\C^m))^a \times (\C^m)^b$.  The proof by Weyl \cite{Weyl} gives an elegant, simple presentation of $R(a, b)$ by quadratic relations  (see Theorem \ref{ffti}).  This sets the pattern for many of the results of classical invariant theory, and as a far-reaching application it provides the Pl\"ucker equations, which cut out one of the ubiquitous moduli spaces of algebraic geometry, the Grassmannian variety $Gr_m(\C^n).$  The main result of this paper draw on a close relationship (Theorem \ref{algebracorrelation}) between $R(a, b)$ and the total coordinate ring (a.k.a the Cox ring) $V_{\mathbb{P}^1, \vec{p}}(a, b)$ of another moduli problem, a stack of quasi-parabolic principal bundles on an $a+ b=n$-marked projective line. We show that $V_{\mathbb{P}^1, \vec{p}}(a, b)$ can be presented by quadratic relations when $\vec{p}$ is a generic arrangement of points, and moreover we find that it is Koszul and Gorenstein.   The crux of our argument is combinatorial, and emerges from so-called Pieri rules (see \cite[page 80]{FH}  ) from the representation theory of $\mathrm{SL}_m(\C)$ and the theory of conformal blocks.

Let $C$ be a smooth, projective complex curve curve and let $\vec{p} = \{p_1, \ldots, p_n\} \subset C$ be a choice of distinct marked points. We fix a Borel subgroup $B \subset \mathrm{SL}_m(\C)$, and choose a parabolic subgroup $B \subset P_i$ for each marked point $p_i$.   A quasi-parabolic principal bundle $(E, \vec{\rho})$ of type $\vec{P} = \{P_1, \ldots, P_n\}$ is an $\mathrm{SL}_m(\C)$ principal bundle $E$ on $C$ and a choice $\rho_i \in E\times_{\mathrm{SL}_m(\C)} \mathrm{SL}_m(\C)/P_i$ from the fiber of the associated $\mathrm{SL}_m(\C)/P_i$ bundle over the point $p_i.$   The moduli stack $\mathcal{M}_{C, \vec{p}}(\vec{P})$ of type $\vec{P}$ quasi-parabolic principal bundles can be thought of as a generalization of the Jacobian of $C$ which also incorporates aspects of the representation theory of $\mathrm{SL}_m(\C)$.  For example, Beauville, Laszlo and Sorger \cite{BL}, \cite{LS}, \cite{BLS}, Kumar, Narasimhan, and Ramanathan \cite{KNR}, and Pauly \cite{P} compute the Picard group of $\mathcal{M}_{C, \vec{p}}(\vec{P})$ in terms of the character lattices $\mathcal{X}_i$ of the parabolic groups $P_i$ as follows:

\begin{equation}\label{piceq}
\Pic(\mathcal{M}_{C, \vec{p}}(\vec{P})) = \mathcal{X}_1 \times \cdots \times \mathcal{X}_n \times \Z.\\
\end{equation}

The total coordinate ring $V_{C, \vec{p}}(\vec{P})$ of $\mathcal{M}_{C, \vec{p}}(\vec{P})$ is the  $\Pic(\mathcal{M}_{C, \vec{p}}(\vec{P}))$-graded sum of all the global section spaces $H^0(\mathcal{M}_{C, \vec{p}}(\vec{P}), \mathcal{L}(\vec{\lambda}, K))$, $\vec{\lambda} = \{\lambda_1, \ldots, \lambda_n\}$, $\lambda_i \in \mathcal{X}_i$, $K \in \Z,$ with product given by global section multiplication. A celebrated result of several authors (see \cite{KNR}, \cite{BLS}, \cite{P}, and \cite{Fal}) identifies each global section space $H^0(\mathcal{M}_{C, \vec{p}}(\vec{P}), \mathcal{L}(\vec{\lambda}, K))$ with a space of so-called conformal blocks $V_{C, \vec{p}}(\vec{\lambda}, K)$ from the Wess-Zumino-Novikov-Witten model of conformal field theory associated to the Lie algebra $sl_m(\C)$.    In this way, $V_{C, \vec{p}}(\vec{P})$ is interesting both as an object from mathematical physics and as a generalization of the algebra of theta functions. 

We describe the total coordinate ring in a special case $\mathcal{M}_{\mathbb{P}^1, \vec{p}}(\vec{P}, \vec{P^*}),$ where the curve $C$ is a projective line, and the parabolic structure at each marked point is given by the stabilizers $P^*, P \subset \mathrm{SL}_m(\C)$ of the highest weight vector in the representation $\C^m$ or its dual representation $\bigwedge^{m-1}(\C^m)$ respectively.   We let $a$ be the number of marked points with parabolic data $P$, $b$ be the number of marked points with parabolic data $P^*$, and we let $V_{\mathbb{P}^1, \vec{p}}(a, b)$ denote the total coordinate ring of this stack. Our main result is the following theorem.

\begin{theorem}\label{main}
For generic arrangements of points $\vec{p},$ the algebra $V_{\mathbb{P}^1, \vec{p}}(a, b)$ is a Koszul, Gorenstein algebra, and is minimally generated by the conformal blocks with $K = 1$. In particular, the ideal of relations on these conformal blocks is quadratically generated.
\end{theorem}

Recall that the Koszul property means that the field of scalars $\C$ has a minimal graded free resolution when regarded as a $V_{\mathbb{P}^1, \vec{p}}(a, b)$ algebra (see \cite[Chapter 2, Definition 1]{PP}. As a Gorenstein algebra the module $Ext_S^{N-d}(V_{\mathbb{P}^1, \vec{p}}(a, b), S)$ is isomorphic to grade-shifted copy of $V_{\mathbb{P}^1, \vec{p}}(a, b)$ (\cite[Theorem 3.3.7]{BH}), where $N$ is the sum of the dimensions of the spaces of conformal blocks with $K = 1$, $S$ is a polynomial algebra on $N$ variables which presents $V_{\mathbb{P}^1, \vec{p}}(a, b)$, and $d$ is the Krull dimension of $V_{\mathbb{P}^1, \vec{p}}(a, b)$.  This implies that the Betti numbers of $V_{\mathbb{P}^1, \vec{p}}(a, b)$ as an $S$ module satisfy a type of Poincare duality: $\beta_i^S = \beta_{N-d -i}^S.$  The degree $A$ of the grade shift  $Ext_S^{N-d}(V_{\mathbb{P}^1, \vec{p}}(a, b), S) \cong V_{\mathbb{P}^1, \vec{p}}(a, b)[A]$ is called the $A$-invariant. We prove (Proposition \ref{interlacegor}) that $A = -2m$ when $a$ and $b$ are larger than $m$.

When $m = 2,$ the parabolic subgroups $P, P^*$ are equal, this case is studied by Castravet and Tevelev in \cite{CT}, where the algebra $V_{\vec{p}}(n)$ is shown to be a Cox-Nagata ring.  This means that $V_{\vec{p}}(n)$ is the algebra of invariants by a certain non-reductive group action on a polynomial ring, and can be identified with the Cox ring of a blow-up of a projective space.  Sturmfels and Xu also study this case in \cite{StXu}, where they construct a SAGBI degeneration $V_{\tree}(n)$ of $V_{\vec{p}}(n)$ for generic $\vec{p}$.   Here $\tree$ is a piece of combinatorial data, it denotes a trivalent tree with $n$ ordered leaves.  The degenerations $V_{\tree}(n)$ constructed in \cite{StXu} are affine semigroup algebras (see Section \ref{semigroup-algebras-section}) are also studied by  Buczy\'{n}ska and Wi\'{s}niewski \cite{BW} in the context of mathematical biology.  Buczy\'{n}ska and Wi\'{s}niewski prove results for $V_{\tree}(n)$ which imply Theorem \ref{main} in the $\mathrm{SL}_2(\C)$ case, by way of the degeneration constructed in \cite{StXu}.

\subsection{Methods and outline of the paper}

We approach $V_{\mathbb{P}^1, \vec{p}}(a, b)$ using a similar recipe of degeneration developed for the general case in \cite{M4}.  For a general
selection of parabolic subgroups $\vec{P}$, \cite[Theorem 1.1]{M4} shows that $V_{\mathbb{P}^1, \vec{p}}(\vec{P})$ can be flatly degenerated to 
any member of a class of algebras $V_{\tree}(\vec{P})$, where once again $\tree$ is a trivalent tree with $n$ ordered leaves (see Section \ref{abdegen}).   The algebras $V_{\tree}(\vec{P})$ are not affine semigroup algebras in general, however they are in a certain sense ``closer'' to affine semigroup algebras than $V_{\mathbb{P}^1, \vec{p}}(\vec{P}).$  Up to automorphism, there is only one $3$-marked projective line $(\mathbb{P}^1, 0, 1, \infty)$, so we may  speak of the unique total coordinate ring $V_{0, 3}(P_1, P_2, P_3)$ in this case.  The algebra $V_{\tree}(\vec{P})$ is the algebra of invariants in a tensor product of these $3$-marked algebras with respect to an algebraic torus, where the individual algebras $V_{0, 3}(P_1, P_2, P_3)$ involved and the torus action depend on $\tree$ and the choice $\vec{P}$.   Because of this format, $V_{\tree}(\vec{P})$ is an affine semigroup algebra if each $V_{0, 3}(P_1, P_2, P_3)$ is an affine semigroup algebra.  To prove Theorem \ref{main} we choose a special tree $\tree_0$ (see Figure \ref{cat}) to guarantee this is the case for each of the four algebras $V_{0, 3}(P, P, B),$ $V_{0, 3}(B, P, B)$, $V_{0, 3}(B, P^*, P^*)$ and $V_{0, 3}(B, P^*, B)$ which appear in the construction of the associated algebra $V_{\tree_0}(a, b).$   From now on we refer to these four algebras as the $K$-Pieri algebras. 

The $K$-Pieri rule (Theorem \ref{kpieri}) enters the picture in our analysis of $V_{0, 3}(P, P, B),$ $V_{0, 3}(B, P, B)$, $V_{0, 3}(B, P^*, P^*)$ and $V_{0, 3}(B, P^*, B)$.   The $K$-Pieri rule implies that the $\Pic$-graded components $V_{0, 3}(\lambda, \mu, \eta, K)$ of each of these algebras are multiplicity-free, and gives necessary and sufficient conditions, in the form of explicit inequalities on the $\mathrm{SL}_m(\C)$-weights $\lambda, \mu, \eta$ and the level $K,$ for a component to be non-zero.  We use these inequalities to show that the $K$-Pieri algebras are polynomial rings (Proposition \ref{4algebrasquantum} ).    In Section \ref{abdegen}, Theorem \ref{kpresent} we use a toric fiber product argument (see Section \ref{semigroup-algebras-section}) to show that $V_{\tree_0}(a, b)$ is presented by an ideal with a quadratic, square-free Gr\"obner basis, this in turn implies that this algebra is Koszul (see e.g. \cite[Theorem 3.1]{PP}).   The algebra $V_{\mathbb{P}^1, \vec{p}}(a, b)$ is therefore Koszul by general properties of flat degenerations (see e.g. \cite[Theorem 1.11]{M6}).

 The techniques we use on $V_{\mathbb{P}^1, \vec{p}}(a, b)$ can also be applied to Weyl's algebra of invariants $R(a, b)$.    The algebra $R(a, b)$ has a flat degeneration to an algebra $R_{\tree_0}(a, b)$ (Proposition \ref{semiiso}), which is built from four algebras $R(P, P, B),$ $R(B, P, B)$, $R(B, P^*, B)$, and $R(B, P^*, P^*)$.  Each of these ``classical Pieri algebras'' is closely related to its counterpart $K$-Pieri algebra (Theorem \ref{algebracorrelation}).  Using the Pieri rule from the representation theory of $\mathrm{SL}_m(\C)$ (Proposition \ref{p2}), we prove that each of these ``classical Pieri algebras'' is a polynomial ring in Proposition \ref{classicalaffinesemigroup}, this is the crucial ingredient in the proof of Theorem \ref{kpresent}.  

In Section \ref{interlacing} we view the algebras $R_{\tree_0}(a, b)$ and $V_{\tree_0}(a, b)$ from two more perspectives.  We show that the affine semigroups underlying both of these algebras are isomorphic to semigroups of interlacing patterns, also known as Gel'fand-Tsetlin patterns.  This alternative point of view allows us to prove that these algebras (and therefore their deformations $R(a, b)$ and $V_{\mathbb{P}^1, \vec{p}}(a, b)$) are Gorenstein, completing the proof of Theorem \ref{main}.  We also give explicit presentations of $R_{\tree_0}(a, b)$ and $V_{\tree_0}(a, b)$ by generators and relations.  Finally, we note that much of what we say in this paper goes through without modification for the moduli stack $\mathcal{M}_{\mathbb{P}^1, \vec{p}}(B, \vec{P}, \vec{P}^*, B)$, see Remark \ref{2gen}.

\subsection{Acknowledgements}

We thank Avinash Dalal, Jennifer Morse, and Kaie Kubjas for enlightening conversations in the course of this project, and we thank the reviewers for helpful suggestions.

\section{Affine semigroup algebras}\label{semigroup-algebras-section}

A flat degeneration to an affine semigroup algebra is a powerful investigative tool
because many algebraic properties of affine semigroup algebras can be proved combinatorially.  For a semigroup $P$ we let $\C[P]$ be the semigroup algebra with complex coefficients, and for a collection of elements $x_1, \ldots, x_k \in P$, we let $x_1 + \cdots + x_k$ denote the sum in $P$ and $[x_1]\cdots[x_k] = [x_1 + \cdots + x_k]$ denote the associated monomial in $\C[P].$    Affine semigroups have a fiber product operation called the toric fiber product (see \cite{Sullivant} and \cite{M6}).  The word ``toric'' is used in this context because the fiber product operation on two semigroups $P, Q$ corresponds to taking a certain subalgebra of torus invariants in $\C[P\times Q] = \C[P]\otimes \C[Q].$  We review the stability of presentation data under the toric fiber product operation, and we recall combinatorial conditions which imply that an affine semigroup algebra is Gorenstein.

\begin{example}\label{exampleinterlace}
An important example of a rational polyhedral cone $\Delta_n$ with a freely generated affine semigroup $\Lambda_n$ will appear in the following section, namely the set of weakly decreasing $n$-tuples of non-negative real numbers: 

\begin{equation}
\Delta_n = \{ (\lambda_1, \ldots, \lambda_n) \ | \ \lambda_i \geq \lambda_{i+1} \geq 0,~ 1 \leq i \leq n-1\}.\\
\end{equation}

\noindent
The affine semigroup $\Lambda_n = \Delta_n \cap \Z^n$ is the set of all partitions of length at most $n$.  For any $\lambda = (\lambda_1, \ldots, \lambda_n) \in \Lambda_n$ there is a unique decomposition:

\begin{equation}
\lambda = (\lambda_1 - \lambda_2)\mathbf{\omega}_1+ \cdots (\lambda_i - \lambda_{i+1})\mathbf{\omega}_i + \cdots  + \lambda_n\mathbf{\omega}_n,\\
\end{equation}

\noindent
where $\mathbf{\omega}_i \in \Lambda_n$ is the element given by $i$ $1$'s followed by $n-i$ $0$'s.  The elements $\mathbf{\omega}_i$ are known as the fundamental weights of $\Delta_n$ when it is viewed as a Weyl chamber of $\mathrm{SL}_{n+1}(\C)$.  The affine semigroup algebra $\C[\Lambda_n]$ is a polynomial ring on $n$ variables.
\end{example}

\subsection{Fiber products}

A convex rational polyhedral cone $\mathcal{P}$ is a convex subset of $\R^n$ defined as the intersection of a finite number of of half spaces which contain the origin and have a rational normal vector with respect to the standard basis $e_1, \ldots e_n \in \R^n$.  For our purposes a map $\pi: \mathcal{P} \to \mathcal{D}$ of rational polyhedral cones with $\mathcal{P} \subset \R^n$ and $\mathcal{D} \subset \R^k$ is induced from a linear map $\pi: \R^n \to \R^k$ whose associated matrix is rational.

\begin{definition}\label{fiberproduct}
For $\mathcal{P}_1 \subset \R^n$, $\mathcal{P}_2 \subset \R^m$, $\mathcal{D} \subset \R^k$ rational polyhedral cones, and maps $\pi_i: \mathcal{P}_i \to \mathcal{D}$, the toric fiber product is the following set:

\begin{equation}
\mathcal{P}_1 \times_{\mathcal{D}} \mathcal{P}_2 = \{(x, y) \ | \ \pi_1(x) = \pi_2(y)\} \subset \mathcal{P}_1 \times \mathcal{P}_2 \subset \R^n \times \R^m.\\
\end{equation}

\end{definition}
 
\noindent
It is straightforward to check that the toric fiber product $\mathcal{P}_1 \times_{\mathcal{D}} \mathcal{P}_2$ is also a polyhedral cone in $\R^n \times \R^m.$ 

A normal affine semigroup is of the form $\mathcal{P} \cap \Z^n \subset \R^n$ for $\mathcal{P}$ a polyhedral cone, all of the semigroups we encounter in this paper of this type.  If we further assume that the rational maps $\pi_i$ in Definition \ref{fiberproduct} are integral, then it makes sense to write $\pi_i: P_i \to D = \mathcal{D} \cap \Z^k$, and we can consider the fiber product semigroup:

\begin{equation}
P_1 \times_D P_2 = \{ (x, y) \in P_1 \times P_2 \ | \ \pi_1(x) = \pi_2(y) \in D\}.\\
\end{equation}

If the generators of the affine semigroups behave well with respect
to the maps $\pi_1, \pi_2,$ then the affine semigroup of the associated toric fiber
product is also well-behaved. The next proposition is \cite[Proposotion 3.1]{M6}.

\begin{proposition}\label{semigen}
Let $\mathcal{P}_i, \mathcal{D}$ be as in Definition \ref{fiberproduct} with $\pi_i: \mathcal{P}_i \to \mathcal{D}$ integral maps.  Assume $D = \mathcal{D}\cap \Z^k$ is freely generated by the subset $T \subset D$, and let $S_i \subset P_i$ be a generating set. Furthermore, suppose that $\pi_i(S_i) \subset T \cup \{0\}$, then the fiber product set $S_1 \cup \{0\} \times_{T\cup \{0\}} S_2 \cup \{0\}$ generates the fiber product semigroup $P_1 \times_D P_2.$
\end{proposition}

\begin{proof}
For any semigroup element $(x, y) \subset P_1 \times_D P_2$, we can factor $x = s_1 + \cdots + s_a$, $s_i \in S_1$, and $y = t_1 + \cdots + t_b$, $t_j \in S_2$ by assumption, this gives the following expression in elements of $D$: 

\begin{equation}
\pi_1(x) = \pi_2(y) = \pi_1(s_1) + \cdots + \pi_1(s_a) = \pi_2(t_1)+ \cdots + \pi_2(t_b).\\
\end{equation}

Assume without loss of generality that $a \leq b$. The affine semigroup $D$ is freely generated by $T$, it follows that each element $\pi_1(s_i) \in T$ also appears as some $\pi_2(t_i) \in T$ (we may reorder the $t_j$ so that this indexing works out), and that the remaining $\pi_2(t_{\ell})$ are all equal to $0$.  This implies that the following pairs are all in the fiber product $S_1 \times_T S_2$:

\begin{equation}
(s_1, t_1), \ldots, (s_a, t_a), (0, t_{a+1}), \ldots, (0, t_b),\\
\end{equation}

\noindent
and furthermore that we have a factorization of $(x, y)$:

\begin{equation}
(s_1, t_1) + \cdots + (s_a, t_a) + (0, t_{a+1}) + \cdots + (0, t_b) = \Big(\sum s_i, \sum t_j\Big) = (x, y).\\
\end{equation}

\end{proof}

The group $\Z^n$ can be identified with the characters $\chi: (\C^*)^n \to \C^*$ of an $n$-torus, and likewise so can the elements of an affine semigroup $P \subset \Z^n$.  In this way, the algebra $\C[P_1 \times P_2]$ naturally comes with an algebraic action of $(\C^*)^{n+m}$. The linear maps $\pi_i: P_i \to D$ define dual maps $\pi_1^*: (\C^*)^k \to (\C^*)^n, \pi_2^*: (\C^*)^k \to (\C^*)^m$, using these we define  a subtorus $T \subset (\C^*)^{n+m}$ to be the image of $(\pi_1^*, [\pi_2^*]^{-1}).$ The invariant algebra $\C[P_1 \times P_2]^T$ is the subalgebra of $\C[P_1 \times P_2]$ with basis given by those points $(x, y)$ with $\pi_1(x) - \pi_2(y) = 0$, namely $\C[P_1 \times_D P_2].$

In what follows, let $S_X$ be the polynomial ring on variables associated to the members of a set $X.$ The sets $S_1 \subset P_1, S_2 \subset P_2$ and $T \subset D$ from Proposition \ref{semigen} define presentations of the corresponding semigroup algebras by polynomial rings: $S_{S_1} \to \C[P_1], S_{S_2} \to \C[P_2]$, and $S_T \to \C[D]$, in particular $\C[D]$ is isomorphic to $S_T$ by the assumption that it is freely generated.  We let $I_{P_1} \subset S_{S_1}, I_{P_2} \subset S_{S_2}$ be the binomial ideals which vanish on the presentation map. Proposition \ref{semigen} implies that under the stated assumptions, these presentations can be used to construct a presentation of $\C[P_1\times_D P_2]$ by the polynomial algebra $S_{S_1\times_T S_2}$.  Next we review results which control the behavior of the corresponding binomial ideal $I_{P_1\times_D P_2}$ of this presentation.

In the proof of Proposition \ref{semigen} several of the elements $\{s_1, \ldots, s_i\}$ may have the same image under $\pi_1$, leading to distinct assignments $s_k \to t_k$ which produce different factorizations of the element $(x, y),$ these factorizations satisfy relations constructed as follows.  For $a, b \in P_1,$ $c, d \in P_2$ with $\pi_1(a) = \pi_2(c) = \pi_1(b) = \pi_2(d)$ an equation $(a, c)+ (b, d) = (b, c) + (a, d)$ holds in $P_1 \times_D P_2,$ we call these swap relations.

For what follows we refer the reader to the book \cite{St} for the basics of Gr\"obner bases.  If $P_1, D, P_2$ are graded semigroups with $S_1, T, S_2$ sets of degree $1$ elements,  then Proposition \ref{semigen} implies that $P_1\times_D P_2$ is also generated in degree $1$. This sort of stability of generating degrees under toric fiber product can also be extended to the defining ideal $I_{P_1\times_D P_2}$ and to Gr\"obner bases of these ideals.

\begin{proposition}\label{kos}
Let $P =\Lambda_{k_1}\times_{\Lambda_{m_1}} \cdots \times_{\Lambda_{m_N}}\Lambda_{k_{N+1}}$ be a fiber product of free graded affine semigroups  generated in degree $1.$  We assume the maps defining this fiber product send the generating set of each factor to a generator or the identity in the appropriate base.   Then the affine semigroup algebra $\C[P]$ is generated in degree $1$, and the swap relations define a quadratic, square-free Gr\"obner basis on the presenting ideal $I_P$. 
\end{proposition}

\begin{proof}
The generation portion of this statement follows by induction from Proposition \ref{semigen}. To address relations, we proceed by induction and adapt the arguments used in \cite[Section 2]{Sullivant} and \cite[Section 4]{M6}.   Let $S_1 \subset \Lambda_{m_1}, S_2 \subset \Lambda_{m_2}$, and $T \subset \Lambda_{k}$ be generating sets so that the set of pairs $(s, t) \in S_1\cup \{0\} \times S_2\cup\{0\}$ where $\pi_1(s) = \pi_2(t)$ generates $\C[\Lambda_{m_1}\times_{\Lambda_{k}} \Lambda_{m_2}]$. Let $Z_1 \subset S_1$ and $Z_2 \subset S_2$ be the sets of generators sent to $0$ by the maps $\pi_1, \pi_2$, then $\C[\Lambda_{m_1}\times_{\Lambda_{k}} \Lambda_{m_2}]$ is polynomial ring in variables $Z_1 \cup Z_2$ over $\C[\Lambda_{m_1'}\times_{\Lambda_{k}} \Lambda_{m_2'}]$, where $m_i = |S_i| - |Z_i|$.  In this way we may reduce to the case where the maps $\pi_i$ only send elements of $S_i$ to $T$.  In this case the Proposition follows from \cite[Proposition 10]{Sullivant} and \cite[Proposition 4.1]{M6}, in particular Sullivant's relations in the statement of Proposition 10 are the swap relations.  

Now we suppose the proposition has been proved for $t \leq N$.  We write $P = \Big(\Lambda_{k_1}\times_{\Lambda_{m_1}} \cdots \times_{\Lambda_{m_{N-1}}}\Lambda_{k_{N}}\Big) \times_{\Lambda_{m_N}} \Lambda_{k_{N+1}} = P'\times_{\Lambda_{m_N}} \Lambda_{k_{N+1}}$. By assumption there is a term order $<'$ on the monomials in the generators of $\C[P']$ which make the swap relations a quadratic, square-free Gr\"obner basis for $I_{P'}$.  We concoct a term order $<$ on the generators of $\C[P]$ by ordering first with $<'$ and then breaking ties with a chosen ordering on the generators of $\Lambda_{k_{N+1}}$.  This is extended lexicographically to the monomials in these generators.  Recall that a monomial $[s_1, v_1] \cdots [s_k,v_k]$ is said to be standard with respect to $<$ if for all monomials $[t_1, u_1]\cdots[t_k,u_k]$ with  $[s_1, v_1]\cdots[s_k,v_k] - [t_1, u_1]\cdots[t_k,u_k] \in I_P$ we have  $[s_1, v_1]\cdots[s_k,v_k] < [t_1, u_1]\cdots[t_k,u_k]$.  To prove the proposition it suffices to show that if $[s_1, v_1]\cdots[s_k,v_k]$  is not standard, it can be changed to a lower monomial with the same image in $\C[P]$ by performing a swap relation.   

Suppose that $[s_1, v_1]\cdots[s_k,v_k]$ (written from greatest generator to least) is not standard, so that there is some monomial $ [t_1, u_1]\cdots[t_k,u_k]$ with  $[s_1, v_1]\cdots[s_k,v_k] > [t_1, u_1]\cdots[t_k,u_k]$. Suppose that the underlying $P'$ monomials are not equal, then by definition $[s_1]\cdots[s_k] >' [t_1]\cdots[t_k]$.  By induction, there is a swap relation $[s_i][s_j] - [s_{i'}][s_{j'}] \in I_{P`}$ with $[s_i][s_j] > [s_{i'}][s_{j'}]$.  Furthermore, as $\Lambda_{m_n}$ is freely generated, the sets $\{\pi_1(s_i), \pi_1(s_j)\}$ and $\{\pi_1(s_{i'}), \pi_1(s_{j'})\}$ are the same. It follows that we can replace $s_i, s_j$ with $s_{i'}, s_{j'}$ by one of the following swap relations in the expression for  $[s_1, v_1]\cdots[s_k,v_k]$ and get a lower monomial: $[s_i, v_i][s_j, v_j] - [s_{i'}, v_i][s_{j`}, v_j]$,  $[s_i, v_i][s_j, v_j] - [s_{i'}, v_j][s_{j`}, v_i]$ .   Either of these is a swap relation in $I_P$, so we can therefore assume that $s_i = t_i$ for all $i$.  Let $[s_i, v_i] > [t_i, u_i]$ be the first place these monomials differ, then we must have $v_i > u_i$ in the ordering on the generators of $\Lambda_{k_{N+1}}$, and $\pi_2(u_i) = \pi_2(v_i)$ since $s_i = t_i$.  As $\Lambda_{k_{N+1}}$ is freely generated, we must have $[s_j, u_i]$ in the monomial for some $j > i$ and $[s_i] >'[s_j]$.  It follows that we may perform the swap $[s_i, v_i][s_j, u_i] - [s_i, u_i][s_j, v_i]$ to lower the monomial.

\end{proof}

\subsection{The Gorenstein property}

For a polyhedral cone $\mathcal{P}$ we let $int(\mathcal{P})$ denote the set of (relative) interior points, namely those points which do not belong to a proper facet of $\mathcal{P}.$  The set $int(\mathcal{P}) \cap P = int(P)$ spans a proper ideal of $\C[P]$ which can be used to characterize the Gorenstein property. The following proposition is \cite[Corollary 6.3.8]{BH}.

\begin{proposition}\label{gor}
Let $P$ be a normal affine semigroup.  The algebra $\C[P]$ is Gorenstein if and only if 
$int(P) = w + P$ for some $w \in int(P).$  In the presence of a grading, the $A$-invariant of $\C[P]$ is $-deg(w).$
\end{proposition}

\begin{example}
The affine semigroup algebra $\C[\Lambda_n]$ from Example \ref{exampleinterlace} is a polynomial ring, and is therefore Gorenstein. The interior points $int(\Lambda_n)$ are generated by $w_n = (n, n-1, \ldots, 1)$. 
\end{example}

\section{Representation theory and the Pieri algebras}\label{rep-theory-section}

  The classification of irreducible representations and the rules governing behavior of tensor products of these representations, including the Pieri rule, are written in the language of convex polyhedra and affine semigroups. After reviewing these topics we give a proof of Proposition \ref{4algebrasclassical}, which says that each of the four Pieri algebras $R(P, P, B)$, $R(B, P, B)$, $R(B, P^*, B)$ and $R(B, P^*, P^*)$ is a polynomial ring.  We refer the reader to the books of Fulton and Harris \cite{FH} and Humphreys \cite{Humphreys} for more background on representation theory. 

\subsection{Irreducible representations of $\mathrm{GL}_m(\C)$ and $\mathrm{SL}_m(\C)$}

We begin by recalling some of the consequential subgroups for the representation theory of $G = \mathrm{SL}_m(\C)$ or $\mathrm{GL}_m(\C)$.  We let $T_G$ and $B_G$ denote the maximal diagonal torus and the Borel subgroup of upper triangular matrices, respectively. The parabolic subgroups containing $B_G$ are denoted by $P \subset G$.  The unipotent radical of $B_G$ is denoted by $U_G \subset G$, this is the subgroup of upper triangular matrices with $1$'s along the diagonal. When it is clear from context we drop the $G$ subscript and write $T, B, U$ for these subgroups. 

 Irreducible representations of $G$ are indexed by special characters of the maximal torus $\lambda: T_G \to \C^*$ called dominant weights.   The group of characters $\mathcal{X}(T_G)$ is isomorphic to $\Z^m$ when $G = \mathrm{GL}_m(\C)$ and $\Z^{m-1}$ when $G = \mathrm{SL}_m(\C)$.  Dominant weights are precisely the integral points inside of a convex polyhedral cone $\Delta_G \subset \mathcal{X}(T_G)\otimes \R$ called the Weyl chamber.  We use the standard convention that a point  $ \lambda \in \R^m$  is in $\Delta_{\mathrm{GL}_m(\C)}$ if and only if its entries are weakly decreasing:

\begin{equation}
\lambda_1 \geq \lambda_2 \geq \cdots \geq \lambda_m.\\
\end{equation}

\noindent
The affine semigroup of dominant weights is denoted $\Lambda_{\mathrm{GL}_m(\C)}$. In analogy to Example \ref{exampleinterlace}, any $\lambda \in \Lambda_{\mathrm{GL}_m(\C)}$ can be written uniquely as a sum $\ell_1 \mathbf{\omega}_1 + \cdots + \ell_m \mathbf{\omega}_m$ with $\ell_i$ non-negative for $1  \leq i \leq m-1$ and $\ell_m \in \mathbb{Z}$ .  The irreducible representation corresponding to a dominant weight $\lambda \in \Lambda_{\mathrm{GL}_m(\C)}$ is denoted $V(\lambda),$ in particular $V(\mathbf{\omega}_i)$ is the exterior power $\bigwedge^i(\C^m)$ with its standard action by $\mathrm{GL}_m(\C).$ 

Much of the representation theory of $\mathrm{SL}_m(\C)$ can be deduced from that of $\mathrm{GL}_m(\C)$ by the inclusion $\mathrm{SL}_m(\C) \subset \mathrm{GL}_m(\C).$ Notice that $V(\omega_m)$ is the determinant representation of $\mathrm{GL}_m(\C)$ and therefore the trivial representation when restricted to $\mathrm{SL}_m(\C).$  More generally, two irreducible representations $V(\lambda)$ and $V(\eta)$, $\lambda, \eta \in \Delta_{\mathrm{GL}_m(\C)}$ restrict to the same irreducible representation on $\mathrm{SL}_m(\C)$ if and only if $\lambda = \eta + N\mathbf{\omega}_m$ for some $N \in \Z$ (see \cite[15.5]{FH}).   All irreducible representations of $\mathrm{SL}_m(\C)$ can be constructed this way, so it is natural to use the conventional choice of Weyl chamber $\Delta_{\mathrm{SL}_m(\C)} \subset \R^{m-1} \cong \R^m/\R(1, \ldots, 1)$ consisting of $m-1$ tuples of weakly descreasing non-negative real numbers:

\begin{equation}
\lambda_1 \geq \cdots \geq \lambda_{m-1} \geq 0.\\
\end{equation}

\noindent
The weights $\Lambda_{\mathrm{SL}_m(\C)}$ are then the $m-1$ tuples of weakly decreasing non-negative integers, and the fundamental weights of $\Lambda_{\mathrm{SL}_m(\C)}$ are $\mathbf{\omega}_1, \ldots, \mathbf{\omega}_{m-1}$.   The Weyl chamber $\Delta_{\mathrm{SL}_m(\C)}$ is a simplicial cone, with extremal rays generated by the $\mathbf{\omega}_i$. Each face $F \subset \Delta_{\mathrm{SL}_m(\C)}$ of this cone corresponds to a distinguished parabolic subgroup $B \subset P_F \subset \mathrm{SL}_m(\C)$, where $F$ consitutes those dominant weights whose associated characters of $T_{\mathrm{SL}_m(\C)}$ extend to $P_F.$  In particular, we let $P^*$ be the parabolic corresponding to the ray through $\mathbf{\omega}_{m-1}$ and $P$ be the parabolic corresponding to the ray through $\mathbf{\omega}_1.$

For any irreducible representation $V(\lambda)$, the induced representation on the dual vector space $V(\lambda)^* \cong V(\lambda^*)$ is also irreducible, and corresponds to a dominant weight $\lambda^*$.  The duality map on weights $d: \Lambda \to \Lambda$, $d(\lambda) = \lambda^*$ is known to be induced by a linear map $d: \Delta \to \Delta$ on the ambient Weyl chamber. Duality acts on $\Delta_{\mathrm{SL}_m(\C)}$ by switching $\mathbf{\omega}_i$ with $\mathbf{\omega}_{m-i}$.

\subsection{Tensor products and the classical Pieri rule}\label{pr}

A tensor product of any pair of irreducible representations of a reductive group $G$ has a unique decomposition into irreducible representations with multiplicity:

\begin{equation}\label{tdecomp}
V(\lambda) \otimes V(\eta) = \bigoplus_{\mu \in \Lambda_{G}} \Hom_{G}(V(\mu), V(\lambda) \otimes V(\eta))\otimes V(\mu).\\
\end{equation}

\noindent
Here $\Hom_{G}(V(\mu), V(\lambda) \otimes V(\eta))$ is the space of interwiners $\rho: V(\mu) \to V(\lambda) \otimes V(\eta).$   In the $\mathrm{GL}_m(\C)$ case, the Pieri rule (\cite[Exercise 6.12]{FH}) gives a recipe for the decomposition in Equation \ref{tdecomp} when one of the factors is a multiple of the first fundamental weight $\mathbf{\omega}_1$.   Recall that two weights $\lambda$ and $\eta$ are said to interlace, written $\eta \prec \lambda$, if $\lambda_i - \eta_i \geq 0$ and $\\eta_i - \lambda_{i+1} \geq 0$.  

\begin{proposition}
For $\eta, \lambda \in \Lambda_{\mathrm{GL}_m(\C)}$, the representation $V(\lambda)$ appears
in the decomposition of $V(\eta) \otimes V(r\mathbf{\omega}_1)$ if and only if $\eta \prec \lambda$ and $\sum \lambda_j - \sum \eta_i = r.$
If this is the case then $\Hom_{\mathrm{GL}_m(\C)}(V(\lambda), V(\eta) \otimes V(r\mathbf{\omega}_1)) \cong \C.$
\end{proposition}

We need to implement the Pieri rule on representations of $\mathrm{SL}_m(\C)$, this is handled by the following Proposition. 

\begin{proposition}\label{p2}
For $\eta, \lambda \in \Lambda_{\mathrm{SL}_m(\C)}$, the representation $V(\lambda)$ appears in the decomposition of $V(\eta) \otimes V(r\mathbf{\omega}_1)$ if and only if there is a dominant $\mathrm{GL}_m(\C)$ weight $\bar{\lambda} \in \Lambda_{\mathrm{GL}_m(\C)}$ such that the following hold:

\begin{enumerate}
\item $\lambda = \bar{\lambda} - \bar{\lambda}_m\mathbf{\omega}_m$,\\
\item $\sum \bar{\lambda}_j - \sum \eta_i = r$,\\
\item $\eta \prec \bar{\lambda} $.\\
\end{enumerate}

\noindent
In the case that $(1) - (3)$ are satisfied, $\Hom_{\mathrm{SL}_m(\C)}(V(\lambda), V(\eta) \otimes V(r\mathbf{\omega}_1)) \cong \C.$
\end{proposition}

\begin{proof}
We can regard $\eta$ and $\lambda$ as members of $\Lambda_{\mathrm{GL}_m(\C)}$ and form the decomposition $V(\eta) \otimes V(r\mathbf{\omega}_1) = \oplus V(\bar{\lambda})$ using the Pieri rule.  Each $\bar{\lambda}$ appearing in this decomposition must satisfy $(2)$ and $(3)$, $\lambda = \bar{\lambda} - \bar{\lambda}_m\mathbf{\omega}_m$ is a dominant weight of $\mathrm{SL}_m(\C)$,  and every representation in the decomposition of $V(\eta)\otimes V(r\mathbf{\omega}_1)$ as a representation of $\mathrm{SL}_m(\C)$ must appear this way.  It remains only to check that the decomposition is multiplicity-free, so we must show that $\bar{\lambda}$ is uniquely determined by $\lambda, \eta$  and $r.$  This follows from the observation that $r = \sum \bar{\lambda}_i - \sum \eta_j = \sum (\lambda_i + \bar{\lambda}_m) - \sum \eta_j$, therefore $m\bar{\lambda}_m = r + \sum \eta_j  - \sum \lambda_i.$
\end{proof}

For general reductive $G$, a space of intertwiners is naturally isomorphic to the space of invariant vectors in a triple tensor product of irreducible representations:

\begin{equation}
\Hom_{G}(V(\mu), V(\lambda) \otimes V(\eta)) \cong [V(\mu^*)\otimes V(\lambda) \otimes V(\eta)]^{G}.\\
\end{equation}

\noindent
Applying this fact, we find that the invariant space $[V(\lambda^*) \otimes V(r\mathbf{\omega}_1) \otimes V(\eta)]^{\mathrm{SL}_m(\C)}$ is isomorphic to $\C$ or $0$, and the former occurs precisely when $\lambda, \eta,$ and $r$ satisfy the hypotheses of Proposition \ref{p2}. The tensor product operation is commutative, it follows that if $(\lambda', \mu', \eta') = \sigma(\lambda, \mu, \eta)$ for any permutation $\sigma \in \mathcal{S}^3$, the corresponding spaces of invariants are naturally isomorphic.  Furthermore, the spaces $[V(\lambda)\otimes V(\mu) \otimes V(\eta)]^{G}$ and $[V(\lambda^*)\otimes V(\mu^*) \otimes V(\eta^*)]^{G}$ are dual vector spaces.  As a consequence of these observations, we derive a dual Pieri rule for $\mathrm{SL}_m(\C).$

\begin{proposition}\label{dp}
For $\eta, \lambda \in \Lambda_{\mathrm{SL}_m(\C)}$, the space $[V(\lambda^*) \otimes V(s\mathbf{\omega}_{m-1}) \otimes V(\eta)]^{\mathrm{SL}_m(\C)}$ is non-trivial if and only if there is a dominant $\mathrm{GL}_m(\C)$ weight $\bar{\eta} \in \Lambda_{\mathrm{GL}_m(\C)}$ such that the following hold:

\begin{enumerate}
\item $\eta = \bar{\eta} - \bar{\eta}_m \mathbf{\omega}_m$,\\
\item $\sum \bar{\eta}_j - \sum \lambda_i = s$,\\
\item $\lambda \prec \bar{\eta}$.\\
\end{enumerate}

In the case that $(1)-(3)$ are satisfied, this space is isomorphic to $\C.$
\end{proposition}

Weights  satisfying the conditions of Proposition \ref{p2} can be arranged into interlacing patterns, as depicted in Figure \ref{pieri2}.  An arrow $x \to y$ between two quantities and in the pattern indicates a weak inequality $x \leq y$, compare this to condition $(3)$ in Proposition \ref{p2}.

\begin{figure}[htbp]
\begin{tikzpicture}
%The interlacing pattern
\draw [->] (0,0) -- (-.8, 1.2);
\draw [->] (-1,1.2) -- (-1.8, 0);
\draw [->] (-2, 0) -- (-2.8, 1.2);
\draw [->] (-3,1.2) -- (-3.8, 0);
\draw [->] (-4, 0) -- (-4.8, 1.2);
\draw [->] (-5,1.2) -- (-5.8, 0);
\draw [->] (-6, 0) -- (-6.8, 1.2);
\draw [->] (-7,1.2) -- (-7.8, 0);
\draw [->] (-8, 0) -- (-8.8, 1.2);
%bottom row, right to left
\node at (.1, -.2) {$0$};
\node at (-1.9, -.2) {$\eta_4$};
\node at (-3.9, -.2) {$\eta_3$};
\node at (-5.9, -.2) {$\eta_2$};
\node at (-7.9, -.2) {$\eta_1$};
%top row, right to left
\node at (-.9, 1.4) {$\bar{\lambda}_5$};
\node at (-2.9, 1.4) {$\bar{\lambda}_4$};
\node at (-4.9, 1.4) {$\bar{\lambda}_3$};
\node at (-6.9, 1.4) {$\bar{\lambda}_2$};
\node at (-8.9, 1.4) {$\bar{\lambda}_1$};
%formula in figure
\node at (1.9,.7) {$\sum_i \bar{\lambda}_i - \sum_{j} \eta_j = r$};
\end{tikzpicture}
\caption{An interlacing pattern corresponding to the space $[V(\lambda^*) \otimes V(r\mathbf{\omega}_1)\otimes V(\eta)]^{\mathrm{SL}_5(\C)}$.}
\label{pieri2}
\end{figure}
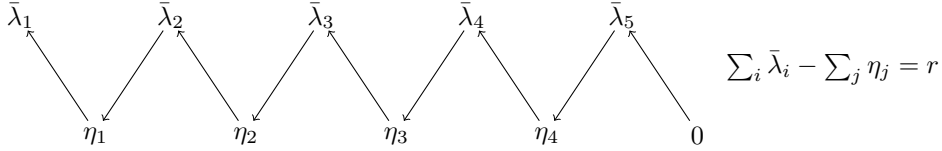

\noindent
Interlacing patterns describing the spaces  $[V(\lambda^*) \otimes V(s\mathbf{\omega}_{m-1}) \otimes V( \eta)]^{\mathrm{SL}_m(\C)}$ are depicted with opposite orientation as in Figure \ref{pieri2dual}.

\begin{figure}[htbp]
\begin{tikzpicture}
%The INVERTED interlacing pattern
\draw [->] (0,1.2) -- (-.8, 0);
\draw [->] (-1,0) -- (-1.8, 1.2);
\draw [->] (-2, 1.2) -- (-2.8, 0);
\draw [->] (-3,0) -- (-3.8, 1.2);
\draw [->] (-4, 1.2) -- (-4.8, 0);
\draw [->] (-5,0) -- (-5.8, 1.2);
\draw [->] (-6, 1.2) -- (-6.8, 0);
\draw [->] (-7,0) -- (-7.8, 1.2);
\draw [->] (-8, 1.2) -- (-8.8, 0);
%top row, right to left
\node at (.1, 1.4) {$0$};
\node at (-1.9, 1.4) {$\lambda_4$};
\node at (-3.9, 1.4) {$\lambda_3$};
\node at (-5.9, 1.4) {$\lambda_2$};
\node at (-7.9, 1.4) {$\lambda_1$};
%bottom row, right to left
\node at (-.9, -.2) {$\bar{\eta}_5$};
\node at (-2.9, -.2) {$\bar{\eta}_4$};
\node at (-4.9, -.2) {$\bar{\eta}_3$};
\node at (-6.9, -.2) {$\bar{\eta}_2$};
\node at (-8.9, -.2) {$\bar{\eta}_1$};
%formula in figure
\node at (1.9,.7) {$\sum_j \bar{\eta}_j - \sum_{i} \lambda_i = s$};
\end{tikzpicture}
\caption{An interlacing pattern corresponding to the space $[V(\lambda^*) \otimes V(s\mathbf{\omega}_4) \otimes V(\eta)]^{\mathrm{SL}_5(\C)}$.}
\label{pieri2dual}
\end{figure}

  We define the upper $\del_1(\mathbf{b})$ and lower $\del_2(\mathbf{b})$ boundaries of an interlacing pattern to be the associated $\mathrm{SL}_m(\C)$ weights:

\begin{equation}
\del_1(\mathbf{b}) = \lambda - \lambda_m\mathbf{\omega}_m, \ \ \ \ \del_2(\mathbf{b}) = \eta - \eta_m\mathbf{\omega}_m.\\
\end{equation}

\noindent
For example, the boundary values of the patterns representing the spaces $[V(\lambda^*)\otimes V(r\mathbf{\omega}_1)\otimes V(\eta)]^{\mathrm{SL}_m(\C)}$ and $[V(\lambda^*)\otimes V(s\mathbf{\omega}_{m-1})\otimes V(\eta)]^{\mathrm{SL}_m(\C)}$ are both $(\lambda, \eta).$

 It will also be important to understand the interlacing patterns for spaces of the form $[V(\lambda^*)\otimes V(s_1\mathbf{\omega}_{m-1}) \otimes V(s_2\mathbf{\omega}_{m-1})]^{\mathrm{SL}_m(\C)}$ and $[V(r_1\mathbf{\omega}_1) \otimes V(r_2\mathbf{\omega}_1)\otimes V(\eta)]^{\mathrm{SL}_m(\C)}.$   Using the recipe in Propositions \ref{p2} and \ref{dp}, the interlacing patterns representing these spaces have the first $m-1$ entries of the lower row (respectively upper row) equal.  This in turn forces the first $m-2$ entries of the upper row (respectively lower row) to be equal, see Figures \ref{pieri3} and \ref{pieri4}.

\begin{figure}[htbp]
\begin{tikzpicture}
%The interlacing pattern
\draw [->] (0,0) -- (-.8, 1.2);
\draw [->] (-1,1.2) -- (-1.8, 0);
\draw [->] (-2, 0) -- (-2.8, 1.2);
\draw [->] (-3,1.2) -- (-3.8, 0);
\draw [->] (-4, 0) -- (-4.8, 1.2);
\draw [->] (-5,1.2) -- (-5.8, 0);
\draw [->] (-6, 0) -- (-6.8, 1.2);
\draw [->] (-7,1.2) -- (-7.8, 0);
\draw [->] (-8, 0) -- (-8.8, 1.2);
%bottom row, right to left
\node at (.1, -.2) {$0$};
\node at (-1.9, -.2) {$\eta_4$};
\node at (-3.9, -.2) {$r_1+ \bar{\lambda}_5$};
\node at (-5.9, -.2) {$r_1+ \bar{\lambda}_5$};
\node at (-7.9, -.2) {$r_1+ \bar{\lambda}_5$};
%top row, right to left
\node at (-.9, 1.4) {$\bar{\lambda}_5$};
\node at (-2.9, 1.4) {$r_1+ \bar{\lambda}_5$};
\node at (-4.9, 1.4) {$r_1+ \bar{\lambda}_5$};
\node at (-6.9, 1.4) {$r_1+ \bar{\lambda}_5$};
\node at (-8.9, 1.4) {$r_1+ \bar{\lambda}_5$};
%formula in figure
\node at (1.9,.7) {$r_1 + 2\bar{\lambda}_5 - \eta_4 = r_2$};
\end{tikzpicture}
\caption{An interlacing pattern corresponding to the space $[V(r_1\mathbf{\omega}_1)\otimes V(r_2\mathbf{\omega}_1)\otimes V(\eta)]^{\mathrm{SL}_m(\C)}$.}
\label{pieri3}
\end{figure}
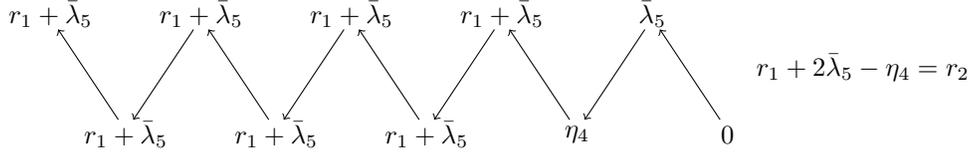

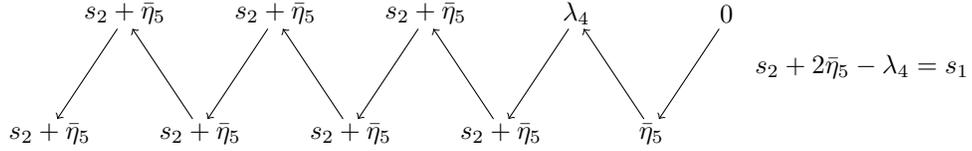
\begin{figure}[htbp]
\begin{tikzpicture}
%The INVERTED interlacing pattern
\draw [->] (0,1.2) -- (-.8, 0);
\draw [->] (-1,0) -- (-1.8, 1.2);
\draw [->] (-2, 1.2) -- (-2.8, 0);
\draw [->] (-3,0) -- (-3.8, 1.2);
\draw [->] (-4, 1.2) -- (-4.8, 0);
\draw [->] (-5,0) -- (-5.8, 1.2);
\draw [->] (-6, 1.2) -- (-6.8, 0);
\draw [->] (-7,0) -- (-7.8, 1.2);
\draw [->] (-8, 1.2) -- (-8.8, 0);
%top row, right to left
\node at (.1, 1.4) {$0$};
\node at (-1.9, 1.4) {$\lambda_4$};
\node at (-3.9, 1.4) {$s_2 + \bar{\eta}_5$};
\node at (-5.9, 1.4) {$s_2 + \bar{\eta}_5$};
\node at (-7.9, 1.4) {$s_2 + \bar{\eta}_5$};
%bottom row, right to left
\node at (-.9, -.2) {$\bar{\eta}_5$};
\node at (-2.9, -.2) {$s_2 + \bar{\eta}_5$};
\node at (-4.9, -.2) {$s_2 + \bar{\eta}_5$};
\node at (-6.9, -.2) {$s_2 + \bar{\eta}_5$};
\node at (-8.9, -.2) {$s_2 + \bar{\eta}_5$};
%formula in figure
\node at (1.9,.7) {$s_2 + 2\bar{\eta}_5 - \lambda_4 = s_1$};
\end{tikzpicture}
\caption{An interlacing pattern corresponding to the space $[V(\lambda^*)\otimes V(s_1\mathbf{\omega}_{m-1})\otimes V(s_2\mathbf{\omega}_{m-1})]^{\mathrm{SL}_m(\C)}$.}
\label{pieri4}
\end{figure}

The conditions of the Pieri rule imply that the entries of an interlacing pattern representing a space of the form $[V(r_1\mathbf{\omega}_1)\otimes V(r_2\mathbf{\omega}_1)\otimes V(\eta)]^{\mathrm{SL}_m(\C)}$ or $[V(\lambda^*)\otimes V(s_1\mathbf{\omega}_{m-1})\otimes V(s_2\mathbf{\omega}_{m-1})]^{\mathrm{SL}_m(\C)}$ are determined by three numbers: $r_1, \eta_{m-1}$, and $r_2$ in the former case and $s_1, \lambda_{m-1},$ and $s_2$ in the latter case.   

We let $L(P_1, P_2, P_3)$ be the set of those $(\lambda_1, \lambda_2, \lambda_3)$ such that $\lambda_i$ is a character of $P_i$ and  $V(\lambda_1) \otimes V(\lambda_2)\otimes V(\lambda_3)$ contains an invariant.  Throughout Subsection \ref{pr} we have seen four distinguished cases: $L(P, P, B)$, $L(B, P, B)$, $L(B,  P^*, B)$, and $L(B, P^*, P^*)$.  These cases orrespond to the spaces of type  $[V(r_1\mathbf{\omega}_1)\otimes V(r_2\mathbf{\omega}_1) \otimes V(\eta)]^{\mathrm{SL}_m(\C)}$, $[V(\lambda)\otimes V(r\mathbf{\omega}_1)\otimes V(\eta)]^{\mathrm{SL}_m(\C)},$ $[V(\lambda)\otimes V(s\mathbf{\omega}_{m-1})\otimes V(\eta)]^{\mathrm{SL}_m(\C)},$ and $[V(s_1\mathbf{\omega}_{m-1})\otimes V(s_2\mathbf{\omega}_{m-1})\otimes V(\eta)]^{\mathrm{SL}_m(\C)}$, respectively.

\begin{lemma}\label{classicalaffinesemigroup}
Each set $L(P, P, B)$, $L(B, P, B)$, $L(B, P^*, B)$, and $L(B, P^*, P^*)$ is a freely generated affine semigroup on $3, 2m-1, 2m-1$ and $3$ generators, respectively.  
\end{lemma}

\begin{proof}
Each of these sets is a subset of $\Lambda_m^3$, and Propositions \ref{p2} and \ref{dp} imply that they are closed under addition. Furthermore, for any interlacing pattern $\mathbf{b}$, $\del_1(\mathbf{b})$ and $\del_2(\mathbf{b})$ satisfy the conditions of Proposition \ref{p2} with $r$ calculated by condition $(2)$.  Condition $(3)$ of Proposition \ref{p2} then implies that $L(B, P, B)$ is isomorphic to $\Lambda_{2m-1}$.  An identical argument proves that $L(B, P^*, B) \cong \Lambda_{2m-1}$, $L(P, P, B) \cong L(B, P^*, P^*) \cong \Lambda_3.$  The proposition then follows from Example \ref{exampleinterlace}. 
\end{proof}

In what follows we refer to the $L(P_1, P_2, P_3)$ appearing in Lemma \ref{classicalaffinesemigroup} as the Pieri semigroups.

\subsection{The algebra of $\mathrm{SL}_m(\C)$ invariant tensors}

Now we bring in a graded algebra $R_n(\mathrm{SL}_m(\C))$ whose graded components are the invariant vector spaces in the $n$-fold tensor products of irreducible $\mathrm{SL}_m(\C)$ representations. Let $A_{\mathrm{SL}_m(\C)}$ be the coordinate ring of the variety $\mathrm{SL}_m(\C)/U$; as a vector space, this algebra is a multiplicity-free direct sum of all irreducible representations of $\mathrm{SL}_m(\C)$, see \cite[Chapter 3]{Gr}: 

\begin{equation}
A_{\mathrm{SL}_m(\C)} = \bigoplus_{\lambda \in \Lambda}V(\lambda).\\
\end{equation}

\noindent
The product operation in this algebra is Cartan multiplication, computed by projecting a tensor product of irreducible representations onto the highest weight component of its direct sum decomposition: 

\begin{equation}
V(\lambda) \otimes V(\eta) \to V(\lambda + \eta).\\
\end{equation}

\noindent
In particular $A_{\mathrm{SL}_m(\C)}$ is graded by the semigroup $\Lambda$, this is equivalent to a right hand side action by the maximal torus $T$ with isotypical components given by the irreducible summands $V(\lambda)$.

 We define $R_n(\mathrm{SL}_m(\C))$ to be the algebra of $\mathrm{SL}_m(\C)$ invariants with respect to the diagonal action on the tensor product $A_{\mathrm{SL}_m(\C)}^{\otimes n}.$  As a vector space, $R_n(\mathrm{SL}_m(\C))$ is a direct sum of all invariant spaces $[V(\vec{\lambda})]^{\mathrm{SL}_m(\C)} = [V(\lambda_1) \otimes \cdots \otimes V(\lambda_n)]^{\mathrm{SL}_m(\C)}$, these summands are the isotypical spaces for the induced action of $T^n$: 

\begin{equation}
R_n(\mathrm{SL}_m(\C)) = \bigoplus_{\vec{\lambda}\in \Lambda^n} (V(\vec{\lambda}))^{\mathrm{SL}_m(\C)}.\\
\end{equation}

The coordinate rings of $\bigwedge^{m-1}(\C^m)$ and $\C^m$  are polynomial algebras on $m$ variables, and as representations, $\C[\bigwedge^{m-1}(\C^m)]$ and $\C[\C^m]$ are the multiplicity free direct sums of the representations in the faces of the Weyl chamber corresponding to $P$ and $P^*$, respectively. Both algebras are graded subalgebras of $A_{\mathrm{SL}_m(\C)}:$

\begin{equation}
\C\Big[\bigwedge^{m-1}(\C^m)\Big] = \bigoplus_{r \geq 0} V(r\mathbf{\omega}_1),  \ \ \ \ \C[\C^m] = \bigoplus_{s \geq 0} V(s\mathbf{\omega}_{m-1}).\\
\end{equation}

\noindent
From this point of view, $R(a, b)$ is a multigraded subalgebra of $R_{a+b}(\mathrm{SL}_m(\C))$.   Now we define several more subalgebras of $R_n(\mathrm{SL}_m(\C))$ along these lines.

\begin{definition}
Let $P_1, \ldots, P_n \subset \mathrm{SL}_m(\C)$ be an $n$-tuple of parabolic subgroups, with associated faces $F_1, \ldots, F_n \subset \Delta$ and subsemigroups $\Lambda_1, \ldots, \Lambda_n \subset \Lambda.$   These define a $T^n$-subalgebra of $R_n(\mathrm{SL}_m(\C))$ as follows:

\begin{equation}
R(\vec{P}) = \bigoplus_{\lambda_1 \in \Lambda_1, \ldots, \lambda_n \in \Lambda_n} \Big(V(\vec{\lambda}))^{\mathrm{SL}_m(\C)} \subset R_n(\mathrm{SL}_m(\C)\Big).\\
\end{equation}

\end{definition}

According to this definition, the algebra $R(a, b)$ is $R(P, \ldots, P, P^*, \ldots, P^*)$. We single out four more algebras of this kind which will be important in what follows: $R(B, P, B)$, $R(B, P^*, B)$, $R(P, P, B)$, and $R(P^*, P^*, B),$ these are the Pieri algebras. 

\begin{proposition}\label{4algebrasclassical}
Each Pieri algebra $R(P_1, P_2, P_3)$ is isomorphic the corresponding semigroup algebra $\C[L(P_1, P_2, P_3)]$. 
\end{proposition}

\begin{proof}
The algebra $R(B, P, B)$ is graded by an action of $T^3$, and the isotypical spaces of this action are precisely the invariant spaces corresponding to the triples of dominant weights appearing in $L(B, P, B).$  Furthermore, each of these isotypical spaces is multiplicity-free by Proposition \ref{p2}, so it follows that $R(B, P, B)$ is isomorphic to the affine semigroup algebra $\C[L(B, P, B)].$  Lemma \ref{classicalaffinesemigroup} implies that $R(B, P, B)$ is a polynomial ring on $2m-1$ variables.  The other three cases are identical. 
\end{proof}

\section{Conformal blocks and the $K$-Pieri algebras}

  We review the construction of the spaces of conformal blocks and the relationship between these spaces and the tensor product invariant spaces of $\mathrm{SL}_m(\C)$.  These two points allow us to bring in the $K$-Pieri rule and prove that each of the $K$-Pieri algebras is a polynomial ring (Proposition \ref{kpierisemigroup}).  We refer the reader to the papers of Beauville \cite{B1}, Looijenga \cite{L} for more on this topic.

\subsection{Conformal blocks and the $K$-Pieri rule}

For $\lambda \in \Lambda$ and $K \in \Z_{\geq 0}$ we let $H(\lambda, K)$ be the integrable highest weight module of the affine Kac-Moody algebra $\hat{sl}_m(\C)$ (\cite[Chapter 12]{Kac}).  We must have $\lambda(h_{\alpha_{1m}}) = \lambda_1 \leq K$ for $h_{\alpha_{1m}}$ the Cartan element associated to the longest postive root of the Lie algebra $sl_m(\C)$. The set $\Delta(K) = \{ \lambda \in \Delta \ | \ \lambda_1 \leq K\}$ is called the level $K$ alcove of the Weyl chamber $\Delta$ and weights $\lambda \in \Lambda(K) = \Lambda \cap \Delta(K)$ are said to be of level $K$.  It is straightforward to check that $\Delta(K)$ and $\Lambda(K)$ are stable under duality $d: \Delta \to \Delta.$

To every marked projective line $(\mathbb{P}^1, \vec{p})$ and assignment of level $K$ weights $p_i \to \lambda_i \in \Delta(K)$ there is a finite dimensional vector space of conformal blocks $V_{\mathbb{P}^1, \vec{p}}(\vec{\lambda}, K).$  This space is constructed as the invariant subspace of the tensor product $H(\vec{\lambda}, K) = H(\lambda_1, K) \otimes \cdots \otimes H(\lambda_n, K)$  with respect to an action by the Lie algebra $\C[\mathbb{P}^1 \setminus \{p_1, \ldots, p_n\}]\otimes sl_m(\C)$ (for a description of this action see \cite[Proposition 2.3]{B1}).   As the markings $\vec{p}$ vary, the spaces $V_{\mathbb{P}^1, \vec{p}}(\vec{\lambda}, K)$ sweep out a vector bundle on the moduli space $\mathcal{M}_{0, n}$, in particular the dimension of $V_{\mathbb{P}^1, \vec{p}}(\vec{\lambda}, K)$ depends only on the marking and level data. There is only one $3$-marked projective line $(\mathbb{P}^1, 0, 1, \infty),$ so in this case the space of conformal blocks $V_{\mathbb{P}^1, 0, 1, \infty}(\lambda, \eta, \mu, K)$ is unique.  From now on we drop the curve $(\mathbb{P}^1, 0, 1, \infty)$ from the notation and simply write $V_{0, 3}(\lambda, \eta, \mu, K).$  The following proposition is the $K$-Pieri rule, which computes the space  $V_{0, 3}(\lambda, \eta, \mu, K)$ when one of the weights is a multiple of $\mathbf{\omega}_1.$   For more background on this rule see \cite{MS}, we will also give a proof later in this section.

\begin{proposition}\label{kpieri}
For $\lambda, \eta \in \Lambda_K$ and $r \leq K$, the space $V_{0,3}(\lambda^*, r\mathbf{\omega}_1, \eta, K)$ is non-trivial if and only if there is a dominant $\mathrm{GL}_m(\C)$ weight $\bar{\lambda} \in \Delta_{\mathrm{GL}_m(\C)}$ such that the following hold:

\begin{enumerate}
\item $\lambda = \bar{\lambda} - \bar{\lambda}_m\mathbf{\omega}_m$,\\
\item $\sum_j \bar{\lambda}_j - \sum_{i} \eta_i = r$,\\
\item $\bar{\lambda}_1 \leq K$,\\ 
\item $\eta \prec \bar{\lambda}$.\\
\end{enumerate}

\noindent
In the case that $(1)-(4)$ hold, $V_{0, 3}(\lambda, r\mathbf{\omega}_1, \eta, K) \cong \C.$
\end{proposition}

\noindent
The entries of the dominant weights $\eta, \bar{\lambda}$ from Proposition \ref{kpieri} can be placed in an interlacing diagram, shown in Figure \ref{pieri} for $m = 5.$

\begin{figure}[htbp]
\begin{tikzpicture}
%The interlacing pattern
\draw [->] (0,0) -- (-.8, 1.2);
\draw [->] (-1,1.2) -- (-1.8, 0);
\draw [->] (-2, 0) -- (-2.8, 1.2);
\draw [->] (-3,1.2) -- (-3.8, 0);
\draw [->] (-4, 0) -- (-4.8, 1.2);
\draw [->] (-5,1.2) -- (-5.8, 0);
\draw [->] (-6, 0) -- (-6.8, 1.2);
\draw [->] (-7,1.2) -- (-7.8, 0);
\draw [->] (-8, 0) -- (-8.8, 1.2);
%bottom row, right to left
\node at (.1, -.2) {$0$};
\node at (-1.9, -.2) {$\eta_4$};
\node at (-3.9, -.2) {$\eta_3$};
\node at (-5.9, -.2) {$\eta_2$};
\node at (-7.9, -.2) {$\eta_1$};
%top row, right to left
\node at (-.9, 1.4) {$\bar{\lambda}_5$};
\node at (-2.9, 1.4) {$\bar{\lambda}_4$};
\node at (-4.9, 1.4) {$\bar{\lambda}_3$};
\node at (-6.9, 1.4) {$\bar{\lambda}_2$};
\node at (-8.9, 1.4) {$\bar{\lambda}_1$};
%formula in figure
\node at (1.9,.9) {$\sum_i \bar{\lambda}_i - \sum_{j} \eta_j = r$};
\node at (1.9, .2){$\bar{\lambda}_1 \leq K$};
\end{tikzpicture}
\caption{An interlacing diagram corresponding to the space $V_{0, 3}(\lambda^*, r\mathbf{\omega}_1, \eta, K)$}
\label{pieri}
\end{figure}

As in the classical case, we define the upper and lower boundaries of a pattern $\mathbf{b}$ representing $V_{0, 3}(\lambda^*, r\mathbf{\omega}_1, \eta, K)$:

\begin{equation}
\del_1(\mathbf{b}) = (\eta, K),  \ \ \ \ \del_2(\mathbf{b}) = (\lambda, K).\\
\end{equation} 

The assumption that $\lambda, \eta \in \Delta(K)$ it must be the case that $\lambda_1, \eta_1 \leq K$, so the boundary maps $\del_1, \del_2$ take values in the semigroup $\Lambda_m$ from Example \ref{exampleinterlace}.  Beauville \cite[Proposition 2.8]{B1} shows that the dimension of a space of conformal blocks does not change if the weights are replaced by their duals, as a consequence there is also a dual $K$-Pieri rule. 

\begin{proposition}\label{dualkpieri}
For $\lambda, \eta \in \Lambda_K$ and $s \leq K$, the space $V_{0,3}(\lambda^*, s\mathbf{\omega}_1, \eta, K)$ is non-trivial if and only if there is a dominant $\mathrm{GL}_m(\C)$ weight $\bar{\eta} \in \Delta_{\mathrm{GL}_m(\C)}$ such that the following hold:

\begin{enumerate}
\item $\bar{\eta} = \eta + \bar{\eta}_m \mathbf{\omega}_m$,\\
\item $\sum_j \bar{\eta}_j - \sum_i \lambda_i^* = s$,\\
\item $\bar{\eta}_1 \leq K$,\\ 
\item $\lambda \prec \bar{\eta}$.\\
\end{enumerate}

\noindent
In the case that $(1)-(4)$ are satisfied, $V_{0, 3}(\lambda^*, s\mathbf{\omega}_1, \eta, K) \cong \C.$
\end{proposition}

\noindent
Interlacing patterns $\bold{b}$ for the spaces $V_{0, 3}(\lambda^*, s\mathbf{\omega}_{m-1}, \eta)$ are depicted with the opposite orientation in Figure \ref{pieridual}.  Boundaries of the dual patterns are defined as expected: $\del_1(\mathbf{b}) = (\eta, K)$, $\del_2(\mathbf{b}) = (\lambda, K).$ In sympathy with the definition of the Pieri semigroups $L(P_1, P_2, P_3)$ we let $Q(P_1, P_2, P_3)$ be the set of $(\lambda_1, \lambda_2, \lambda_3, K$ such that the space $V_{0, 3}(\lambda, \mu, \eta, K) \neq 0$ and $\lambda_i \in P_i.$

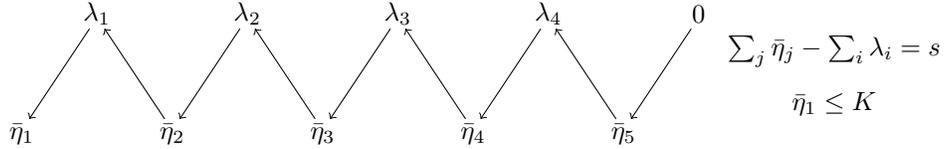
\begin{figure}[htbp]
\begin{tikzpicture}
%The INVERTED interlacing pattern
\draw [->] (0,1.2) -- (-.8, 0);
\draw [->] (-1,0) -- (-1.8, 1.2);
\draw [->] (-2, 1.2) -- (-2.8, 0);
\draw [->] (-3,0) -- (-3.8, 1.2);
\draw [->] (-4, 1.2) -- (-4.8, 0);
\draw [->] (-5,0) -- (-5.8, 1.2);
\draw [->] (-6, 1.2) -- (-6.8, 0);
\draw [->] (-7,0) -- (-7.8, 1.2);
\draw [->] (-8, 1.2) -- (-8.8, 0);
%top row, right to left
\node at (.1, 1.4) {$0$};
\node at (-1.9, 1.4) {$\lambda_4$};
\node at (-3.9, 1.4) {$\lambda_3$};
\node at (-5.9, 1.4) {$\lambda_2$};
\node at (-7.9, 1.4) {$\lambda_1$};
%bottom row, right to left
\node at (-.9, -.2) {$\bar{\eta}_5$};
\node at (-2.9, -.2) {$\bar{\eta}_4$};
\node at (-4.9, -.2) {$\bar{\eta}_3$};
\node at (-6.9, -.2) {$\bar{\eta}_2$};
\node at (-8.9, -.2) {$\bar{\eta}_1$};
%formula in figure
\node at (1.9,.9) {$\sum_j \bar{\eta}_j - \sum_{i} \lambda_i = s$};
\node at (1.9, .2) {$\bar{\eta}_1 \leq K$};
\end{tikzpicture}
\caption{An interlacing diagram corresponding to the space $V_{0, 3}(\lambda^*, s\mathbf{\omega}_{m-1}, \eta, K)$}
\label{pieridual}
\end{figure}

\begin{lemma}\label{kpierisemigroup}
The sets  $Q(P, P, B)$, $Q(B, P^*, B)$, $Q(B, P, B)$, and $Q(B, P^*, P^*)$ are freely generated affine semigroups on  $4$, $2m$, $2m$, and $4$ generators, respectively. 
\end{lemma}

\begin{proof}
It is straightforward to check that $Q(B, P^*, B)$ is closed under addition, so it remains only to check that it is freely generated. Following the definition, $(\lambda, r, \eta, K) \in Q(B, P^*, B)$ if and only if $K \geq \bar{\lambda}_1 \geq \eta_1 \geq \cdots \geq \bar{\lambda}_{m-1} \geq \eta_{m-1} \geq \bar{\lambda}_m$ (recall that $\bar{\lambda}$ is recoverable from $\lambda, \eta, r$).  It follows from Example \ref{exampleinterlace} that this semigroup is freely generated.  An identical proof applies to the other three cases.     
\end{proof}

\noindent
From now on we refer to the sets in Lemma \ref{kpierisemigroup} are the $K$-Pieri semigroups.

\subsection{Conformal blocks, principal bundles, and correlation}

Several authors (see e.g. \cite{P}, \cite{KNR}, \cite{BL}, \cite{LS}, \cite{BLS}, \cite{Fal}) have shown that the conformal block vector spaces coincide with the global sections of line bundles on the moduli stack $\mathcal{M}_{\mathbb{P}^1, \vec{p}}(\vec{P})$:

\begin{equation}
H^0(\mathcal{M}_{\mathbb{P}^1, \vec{p}}(\vec{P}), \mathcal{L}(\vec{\lambda}, K)) = V_{\mathbb{P}^1, \vec{p}}(\vec{\lambda}, K).\\
\end{equation}

\noindent
As a consequence, the total coordinate ring $V_{\mathbb{P}^1, \vec{p}}(\vec{P})$ is a graded sum of all the spaces of conformal blocks on $(\mathbb{P}^1, \vec{p})$:

\begin{equation}
V_{\mathbb{P}^1, \vec{p}}(\vec{P}) = \bigoplus_{\lambda_1 \in \Lambda_i, \ldots, \lambda_n \in \Lambda_n, K \geq 0} V_{\mathbb{P}^1, \vec{p}}(\vec{\lambda}, K).\\
\end{equation}   

\noindent
Here $\Lambda_i$ is the monoid of dominant weights associated to $P_i$, this condition and the non-negativity of $K$ are necessary for $\mathcal{L}(\vec{\lambda}, K)$ to be effective. This direct sum expression is the isotypical decomposition of $V_{\mathbb{P}^1, \vec{p}}(\vec{P})$ with respect to a $T^n\times \C^*$ action, where the $\C^*$ action is derived from the grading by the level.  

 As the $K$-Pieri rule indicates, the representation theory of $\mathrm{SL}_m(\C)$ and the theory of $\hat{sl}_m(\C)$ conformal blocks are closely related.  This is  succinctly captured in the following theorem; the vector space version of this statement is \cite[Proposition 4.1]{B1}, we present the conclusion of \cite[Section 2]{M4}.  

\begin{theorem}\label{algebracorrelation}
Give the polynomial algebra $\C[t]$ its $\C^*$ action. There is a $T^n \times \C^*$-equivariant inclusion of algebras:

\begin{equation}
\Phi_{\mathbb{P}^1, \vec{p}}: V_{\mathbb{P}^1, \vec{p}}(\vec{P}) \to R(\vec{P})\otimes \C[t],\\
\end{equation}

\begin{equation}
\Phi_{\mathbb{P}^1, \vec{p}}: V_{\mathbb{P}^1, \vec{p}}(\vec{\lambda}, K) \to (V(\vec{\lambda}))^{\mathrm{SL}_m(\C)}t^K.\\
\end{equation}

\end{theorem}

In fact, by \cite[Proposition 2]{M10} the algebra $V_{\mathbb{P}^1, \vec{p}}(\vec{P})$ is a Rees algebra of $R(\vec{P})$ with respect to a discrete valuation on $R(\vec{P})$ (this proposition is written in the $\mathrm{SL}_3(\C)$ case, but applies to general simple, simply connected groups).  In particular, for every marked projective line $(\mathbb{P}^1, \vec{p})$ there is a discrete valuation over $\C$: $v_{\vec{p}}: R(\vec{P}) \to \Z \cup \{-\infty\}$ which we call the threshold valuation (see \cite[Section 4]{M10}), such that the following holds:

\begin{equation}
V_{\mathbb{P}^1, \vec{p}}(\vec{\lambda}, K) = \{f \ | \ v_{\vec{p}}(f) \leq K\} \subset (V(\vec{\lambda}))^{\mathrm{SL}_m(\C)}.\\
\end{equation}

Recall that a discrete valuation is subadditive: $v_{\vec{p}}(f + g) \leq MAX\{v_{\vec{p}}(f), v_{\vec{p}}(g)\}$, multiplicative: $v_{\vec{p}}(fg) = v_{\vec{p}}(f) + v_{\vec{p}}(g)$, sends non-zero scalars to $0$: $v_{\vec{p}}(C) = 0$, and sends $0$ to $-\infty$: $v_{\vec{p}}(0) = -\infty.$  Also, any non-scalar invariant must have threshhold value at least $1$, so this function only sends scalars to $0$. These properties will be useful in determining the structure of  $V_{\mathbb{P}^1, \vec{p}}(\vec{P})$ from that of $R(\vec{P})$ in the cases we will consider.  Five special cases are relevant: $V_{\mathbb{P}^1, \vec{p}}(a, b)$ is a Rees algebra of $R(a, b)$, and each $K$-Pieri algebra $V_{0, 3}(P_1, P_2, P_3)$ is a Rees algebra of the corresponding Pieri algebra $R(P_1, P_2, P_3).$

\begin{proposition}\label{quantumaffinesemigroup}
Each of the $K$-Pieri algebras is an affine semigroup algebra. 
\end{proposition}

\begin{proof}
We make our argument for $V_{0, 3}(B, P, B)$, the other three cases are identical.  By Proposition \ref{4algebrasclassical}, $R(B, P, B) \cong \C[L(B, P, B)]$, and by Theorem \ref{algebracorrelation}, $V_{0, 3}(B, P, B)$ is a graded subalgebra of the affine semigroup algebra $\C[L(B, P, B)\times \Z_{\geq 0}].$  It follows that each graded component $V_{0, 3}(\lambda, r\mathbf{\omega}_1, \eta, K)$ can be identified with an interlacing pattern and a non-negative integer $(\mathbf{b}, K)$, and that multiplication of components corresponds to the expected addition operation on these pairs. 
\end{proof}

We let $Q'(P_1, P_2, P_3)$ be the subsemigroup of $L(P_1, P_2, P_3)\times \Z_{\geq 0}$ defined in the conclusion of Proposition \ref{quantumaffinesemigroup}.  In what follows we prove each of these semigroups are isomorphic to the corresponding $K$-Pieri semigroup, establishing the $K$-Pieri rule and that the $K$-Pieri algebras are polynomial rings.  To do this we need a characterization of the space $V_{0, 3}(\lambda, \mu, \eta, K)$ found in \cite{U}.  Consider the copy of $\mathrm{SL}_2(\C)$ inside $\mathrm{SL}_m(\C)$ which corresponds to the longest root $\alpha_{1m}.$ An irreducible representation $V(\lambda)$ can be decomposed into isotypical components of this subgroup $V(\lambda) = \bigoplus_{i \geq 0} W_{\lambda, i},$ this decomposition is used to define the following subspace $W_K \subset V(\lambda) \otimes V(\eta) \otimes V(\mu)$: 

\begin{equation}
W_K = \bigoplus_{i + j + k \leq 2K} W_{\lambda, i} \otimes W_{\eta, j} \otimes W_{\mu, k}.\\
\end{equation}

\noindent
The following is \cite[3.5.2]{U} and \cite[Proposition 4.3]{B1}. 

\begin{proposition}\label{ueno}
The space of conformal blocks $V_{0, 3}(\lambda, \mu, \eta, K)$ can be identified with the space $W_K \cap (V(\lambda)\otimes V(\mu) \otimes V(\eta))^{\mathrm{SL}_m(\C)}.$
\end{proposition}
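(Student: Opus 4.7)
The plan is to work from the definition of genus-$0$ conformal blocks as the dual of the coinvariants of the tensor product $H_\lambda \otimes H_\eta \otimes H_\mu$ of level-$K$ integrable highest-weight $\hat{\mathfrak{sl}_m(\C)}$-modules under the gauge Lie algebra $\mathfrak{sl}_m(\C)\otimes \C[t,t^{-1},(t-1)^{-1}]$ of $\mathfrak{sl}_m(\C)$-valued rational functions on $\mathbb{P}^1$ that are regular away from $\{0,1,\infty\}$, combined with propagation of vacua, which identifies $V_{0,3}(\lambda,\eta,\mu,K)$ with a subspace of $V(\lambda)\otimes V(\eta)\otimes V(\mu)$ obtained by restriction to the top-weight factors of $H_\lambda$, $H_\eta$, $H_\mu$.

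The first step is the inclusion $V_{0,3}(\lambda,\eta,\mu,K)\subseteq [V(\lambda)\otimes V(\eta)\otimes V(\mu)]^{SL_m(\C)}$. This is immediate from the fact that the constants $X\otimes 1$, for $X\in\mathfrak{sl}_m(\C)$, lie in the gauge algebra and act diagonally on the triple tensor product, so gauge invariance automatically forces $SL_m(\C)$-invariance. The second step, which is the main content of the proposition, is to show that gauge invariance also forces the level constraint $W_K$. For this I would use the level-$K$ integrability relation associated to the affine simple root $\alpha_0 = \delta - \theta$ at each of the three punctures. Transporting these relations via the gauge action of suitably chosen functions of the form $X_\theta \cdot g(t)$ with prescribed pole orders at $0$, $1$, $\infty$ yields an identity in the three-fold tensor product. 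Applying the coproduct formula for the longest-root copy of $SL_2(\C)$ shows that any gauge-invariant functional must vanish outside the subspace $W_K$, recovering exactly the inequality $i+j+k\leq 2K$ on the combined isotypical weights.

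For the reverse inclusion I would argue that no further constraints beyond $SL_m(\C)$-invariance and this single long-root bound are imposed. The reduction rests on the observation that the quotient $\mathfrak{sl}_m(\C)\otimes\C[t,t^{-1},(t-1)^{-1}]/(\mathfrak{sl}_m(\C)\otimes 1)$ is generated, as an $\mathfrak{sl}_m(\C)$-module, by Laurent monomials $X_\theta\otimes t^k$ at the long root. Since the Weyl group of $\mathfrak{sl}_m(\C)$ acts transitively on long roots and $SL_m(\C)$-invariance is built in, the relation at $\theta$ suffices to generate all relations across all roots and pole orders, recovering the full set of gauge conditions from $SL_m(\C)$-invariance together with the $W_K$ condition.

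The main obstacle is making this reduction rigorous and obtaining equality rather than mere inclusion. The cleanest way I see to close the argument is a dimension comparison: the dimension of $W_K \cap [V(\lambda)\otimes V(\eta)\otimes V(\mu)]^{SL_m(\C)}$ can be computed explicitly by intersecting the classical Pieri/Littlewood-Richardson decomposition of the invariant space with the long-root isotypical bound, while the dimension of $V_{0,3}(\lambda,\eta,\mu,K)$ is given by the Verlinde formula. Once the two dimensions are checked to agree, the inclusion established above is forced to be an equality, proving the proposition.
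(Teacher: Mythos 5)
The paper does not prove this statement at all: it is quoted as a known result from Ueno's monograph \cite{U} (the identification of three-point, genus-zero conformal blocks inside the classical invariants via the $\theta$-isotypic bound), so the benchmark here is the standard Tsuchiya--Ueno--Yamada/Beauville argument that your sketch is clearly modeled on. Your overall route --- coinvariants of the gauge algebra, propagation of vacua to restrict to $V(\lambda)\otimes V(\eta)\otimes V(\mu)$, $SL_m(\C)$-invariance from the constant part of the gauge algebra, and the level constraint from integrability at $\alpha_0=\delta-\theta$ --- is the right skeleton.

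However, as a proof it has two genuine gaps. First, in the forward direction you assert that transporting the integrability relations and ``applying the coproduct formula for the longest-root copy of $SL_2(\C)$'' yields exactly the bound $i+j+k\le 2K$ on the $\theta$-isotypic components. What the gauge/integrability argument actually produces is a vanishing condition involving powers of $X_\theta$ applied at a single tensor factor (with exponent controlled by $K$ and the pairing with $\theta^\vee$); converting that into the symmetric condition defining $W_K$ is precisely the level-$K$ three-point statement for the $\theta$-copy of $\widehat{sl_2}$, and this equivalence is not proved in your sketch --- it is essentially the content of the proposition in rank one. Second, and more seriously, the reverse inclusion (that $SL_m(\C)$-invariance plus the $W_K$ condition impose no further constraints) is exactly the hard half, and your proposed closure by ``dimension comparison with the Verlinde formula'' does not close it: you offer no independent computation of $\dim\bigl(W_K\cap[V(\lambda)\otimes V(\eta)\otimes V(\mu)]^{SL_m(\C)}\bigr)$ for general triples of weights, and no argument that it matches the Verlinde sum --- establishing that equality is essentially equivalent to the proposition itself. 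It is also backwards relative to this paper's logic, where Proposition \ref{ueno} is the input used (via the valuation $v_\theta$) to derive the fusion/$K$-Pieri dimensions in the first place. The module-generation step (reducing all gauge conditions to the long root via Weyl/group conjugation, plus injectivity of the restriction map via propagation) is morally fine in type $A$ but is also only name-checked. As written, the proposal is an outline of the known proof with its two essential steps left unproven; citing \cite{U} (or Beauville's exposition) as the paper does, or carrying out the $\theta$-$SL_2$ reduction and the surjectivity argument in detail, would be needed to make it complete.
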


Proposition \ref{ueno} can be used to explicitely compute the valuation $v_{0, 3}$ on the Pieri algebras, because it allows us to find the threshhold levels of the generators of these algebras.    The generators $\mathbf{w}_{s, t}$ of the Pieri semigroups each represent the unique invariant in a tensor product of the form $\bigwedge^k(\C^m) \otimes \C^m \otimes \bigwedge^j(\C^m)$ and $\bigwedge^k(\C^m) \otimes \bigwedge^{m-1}(\C^m) \otimes \bigwedge^j(\C^m),$ where $k + j + 1 \in m\Z,$ and $\bigwedge^k(\C^m) \otimes \bigwedge^j(\C^m),$ where $k + j \in m\Z.$  These invariants have the following form as tensors: 

\begin{equation}
T_{i, 1, j} = \sum_{|I| =i, |J| = j} (-1)^{\sigma(I, k, J)} z_I \otimes z_k \otimes z_J,\\
\end{equation}

$$T_{i, m-1, j} = \sum_{|I| = i, |K| = m-1 ,|J| = j} (-1)^{\sigma(I, K, J)} z_I \otimes z_K \otimes z_J,$$

$$P_{i, j} = \sum_{|I| = i, |J| = j} (-1)^{\sigma(I, J)} z_I \otimes z_J.$$

\noindent
Here $z_I$ is a wedge product of basis vectors over the indicated index set, and $(-1)^{\sigma(I, K, J)}$ is a sign function, the following is \cite[Proposition 3.4]{KM}.

\begin{lemma}\label{genlevel}
Each generating invariant of the algebras $R(B, P, B)$, $R(B, P^*, B)$, $R(P, P, B)$ and $R(P^*, P^*, B)$ has threshhold value equal to $1.$
\end{lemma}

\begin{proposition}\label{4algebrasquantum}
Each $K$-Pieri semigroup $Q(P_1, P_2, P_3)$ is isomorphic to its counterpart $Q'(P_1, P_2, P_3).$
\end{proposition}

\begin{proof}
We will prove $Q(B, P^*, B) \cong Q'(B, P^*, B)$, the other three cases are similar. Note that both of these semigroups are subsets of $L(B, P^*, B)\times \Z_{\geq 0}.$ 
First we observe that each generator $\mathbf{w}_{ij} \in L(B, P^*, B)$ has first top row entry less than or equal to $1$, which is the threshhold value of the corresponding generator of $R(B, P^*, B)$.  It follows that $(\mathbf{w}_{ij}, 1) \in Q'(B, P^*, B)$.  Furthermore, these elements generate $Q(B, P^*, B)$, along with $(\bold{0}, 1)$ by Lemma \ref{kpierisemigroup}. If $(\mathbf{b}, K) \in Q'(B, P^*, B)$, then $\mathbf{b}$ can be represented uniquely as a sum $\sum K_i\mathbf{w}_{ij}.$  It follows that the corresponding element in $R(B, P^*, B)$ has thresshold value equal to $\sum K_i \leq K$, and that we can write $(\mathbf{b}, K) = \sum K_i(\mathbf{w}_{ij}, 1) + (K- \sum K_i) (\bold{0}, 1).$ This proves the proposition. 
\end{proof}

\section{Structure of $R_{\tree_0}(a, b)$ and $V_{\tree_0}(a, b)$}\label{abdegen}

Now that we've shown that the Pieri algebras and the $K$-Pieri algebras are polynomial rings, we can use Proposition \ref{kos} to find presentations of the algebras $R_{\tree_0}(a, b)$ and $V_{\tree_0}(a, b)$.  We review the construction of these algebras, their relationships to $R(a, b)$ and $V_{\mathbb{P}^1, \vec{p}}(a, b)$, and then we prove the generation and Koszul statements in Theorem \ref{main}.

Figure \ref{cat} depicts the tree $\tree_0$, which is the combinatorial backbone
to the construction of $R_{\tree_0}(a, b)$ and $V_{\tree_0}(a, b)$.   This figure also shows the forest $\hat{\tree}_0$, which is obtained from $\tree_0$ by splitting each edge which is not attached to a leaf. 

\begin{figure}[htbp]
\centering
\includegraphics[scale = 0.5]{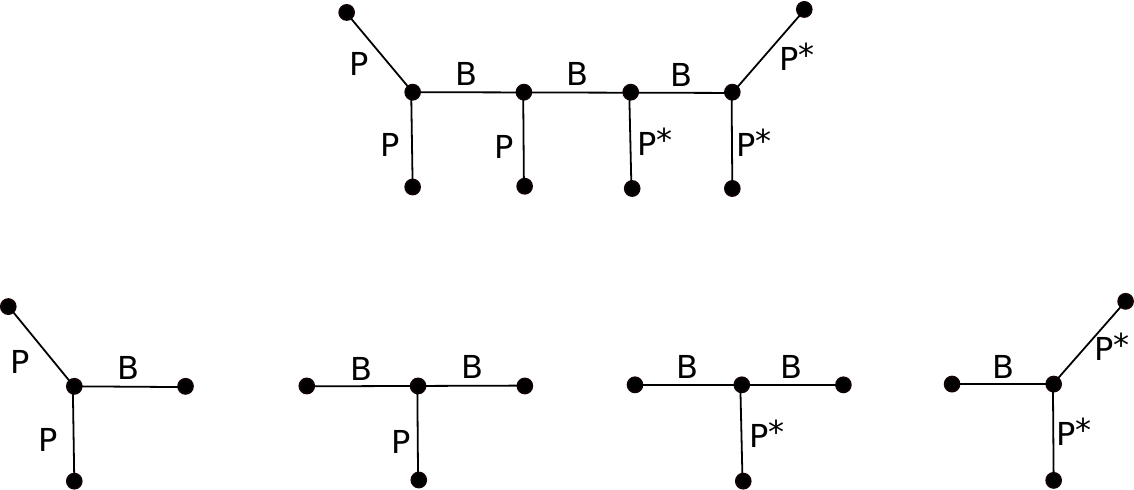}
\caption{The tree $\tree_0$ and the forest $\hat{\tree}_0$.}
\label{cat}
\end{figure}

 We label the following elements of $\tree_0$ from left to right: the leaves are $\ell_1, \ldots, \ell_n$, the non-leaf vertices are $v_1, \ldots, v_{n-2}$, the non-leaf edges (those labelled by $B$ in Figure \ref{cat}) are $h_1, \ldots, h_{n-3}$, and the leaf edges (those not labelled by $B$) are $g_1, \ldots, g_n$.  We will use the same labels for the corresponding elements in $\hat{\tree}_0$, with the exception of the $h_i$ which are replaced with new split edges $e_i$ and $f_i.$  The vertex $v_i$ in $\hat{\tree}_0$ $2 \leq i \leq n-3$ is in the boundary of $e_i$, $f_{i+1}$, and $g_{i+1}$, and $v_1, v_{n-2}$ are in the boundary of $g_1, g_2, f_1$ and $e_{n-3}, g_{n-1}, g_n$, respectively.  

To each of the non-leaf vertices $v_i$ of $\hat{\tree}_0$ we can associate an algebra $V_{0, 3}(P_1, P_2, P_3)$, where the $P_k$ are chosen in accordance with the labels.   Each of these algebras has an action of $T \times T \times T$, and each $T$ in this product is naturally associated to one of the edges of the corresponding vertex. In this way, the product torus $(T\times T \times T)^{n-2}$ acts on the tensor product $V_{0, 3}(P, P, B)\otimes \cdots \otimes V(B, P^*, P^*)$.  Let $T\times T$ be the subproduct associated to the edges $e_i, f_i$, this pair acts on the direct summand $V_{0, 3}(r_1\mathbf{\omega}_1, r_2\mathbf{\omega}_1, \eta_1, K_1)\otimes$ $\cdots$ $\otimes V_{0, 3}(\lambda_{n-3}, s_{n-1}\mathbf{\omega}_{m-1}, s_n \mathbf{\omega}_{m-1}, K_{n-2})$ with the character $(\eta_i, \lambda_i)$.  We let $T_i \subset T\times T \times \C^* \times \C^*$ be the subtorus (isomorphic to $T \times \C^*$) which acts on this same summand with character $(\eta_i - \lambda_i^*, K_{i-1}-K_i)$. The algebra $V_{\tree_0}(a, b)$ is defined as the invariant subalgebra with respect to the action of the torus  $\mathbb{T} = \prod_{i =1}^{n-2} T_i$:

\begin{equation}
V_{\tree_0}(a, b) = [V_{0, 3}(P, P, B) \otimes \cdots \otimes V_{0, 3}(B, P^*, P^*)]^{\mathbb{T}}.\\
\end{equation}

 We assign the $K$-Pieri semigroups to the vertices of $\hat{\tree}_0$ in the manner above, letting $Q_i$ denote the semigroup at vertex $v_i.$ For every pair $e_i, f_i$ we consider the corresponding boundary maps $\del_2, \del_1: Q_{i-1}, Q_i \to \Lambda_m.$  For an interlacing pattern $\mathbf{b}$ reprsenting the space $V_{0, 3}(\lambda, r\mathbf{\omega}_1, \eta, K)$, the boundary maps $\del_1, \del_2$ return $(\lambda^*, K)$ and $(\eta, K)$, respectively, in particular the boundary maps take values in the (graded) freely generated semigroup $\Lambda_m$.  Accordingly, we define $Q(a, b)$ to be the toric fiber product semigroup $Q_1\times_{\Lambda_m}\times \cdots \times_{\Lambda_m}Q_{n-2}.$

\begin{lemma}\label{Ksemiiso}
The invariant algebra $V_{\tree_0}(a, b)$ is isomorphic to the affine semigroup algebra $\C[Q(a, b)]$. 
\end{lemma}

\begin{proof}
By Proposition \ref{quantumaffinesemigroup}, the tensor product $V_{0, 3}(P, P, B)\otimes \cdots \otimes V(B, P^*, P^*)$ is isomorphic to $\C[Q_1\times \cdots \times Q_{n-2}],$ so it suffices to check that the toric fiber product $Q(a, b)$ and the action of the torus $\mathbb{T}$ pick out the same subalgebra of this algebra.  A summand $V_{0, 3}(r_1\mathbf{\omega}_1, r_2\mathbf{\omega}_1, \eta_1, K_1)\otimes$ $\cdots$ $\otimes V_{0, 3}(\lambda_{n-3}, s_{n-1}\mathbf{\omega}_{m-1}, s_n \mathbf{\omega}_{m-1}, K_{n-2})$ is in $V_{\tree_0}(a, b)$ if and only if $\eta_i = \lambda_i^*$ and $K_i = K_{i+1},$ but this is precisely
the condition that the $\del_1$ and $\del_2$ values coincide at each pair $e_i, f_i$.
\end{proof}

Each of the generators $\mathbf{w}_{ij}$ of the $K$-Pieri semigroups maps to a generator in $\Lambda_m$ under $\del_1$ and $\del_2$.  These observations, along with Lemma \ref{Ksemiiso}, Proposition \ref{4algebrasquantum}, and Proposition \ref{kos} imply the following theorem. 

\begin{theorem}\label{kpresent}
The algebra $V_{\tree_0}(a, b)$ is generated by its summands with $K =1$, and the ideal of forms vanishing on these generators has a quadratic, square-free Gr\"obner basis.
\end{theorem}

Since $V_{\tree_0}(a, b)$ is a flat degeneration of $V_{\mathbb{P}, \vec{p}}(a, b)$ by \cite[Theorem 1.1]{M4}, the same argument used in \cite[Theorem 1.11]{M6} implies that $V_{\mathbb{P}, \vec{p}}(a, b)$ is generated by the conformal blocks of level $1$ and is a Koszul algebra when $\vec{p}$ is generic.

\subsection{The degeneration $R_{\tree_0}(a, b)$}

Now we apply the construction from the previous subsection to $R(a, b).$  Using the Pieri algebras we define $R_{\tree_0}(a, b)$ using the tree $\tree_0$ as a combinatorial guide, as in the construction of $V_{\tree_0}(a, b).$  We let $L_1, \ldots, L_{n-2}$ be the Pieri semigroups associated to the vertices of $\tree_0$, and we define $L(a, b)$ to be the toric fiber product $L_1\times_{\Lambda_{m-1}} \cdots \times_{\Lambda_{m-1}} L_{n-2}$. 

\begin{proposition}\label{semiiso}
The affine semigroup algebra $\C[L(a, b)]$ is isomorphic to $R_{\tree_0}(a, b)$, and is a flat degeneration of $R(a, b)$.
\end{proposition}

\begin{proof}
This follows from an almost identical argument to Lemma \ref{Ksemiiso}, and the degeneration construction in \cite[Theorem 1.8]{M4}.
\end{proof}

\noindent
The hypotheses of Proposition \ref{kos} are also satisfied by $L_1, \ldots, L_{n-2}$ and the boundary maps $\del_1, \del_2$, so we can draw the same conclusions as in Theorem \ref{kpresent}: $R_{\tree_0}(a, b)$ is generated by the products of the generators of the Pieri algebras, and these are subject to a quadratic Gr\"obner basis.  In the next section we will explore the presentation we derive for $R(a, b)$ from the degeneration to $R_{\tree_0}(a, b)$ and compare it to the one constructed in the first fundamental theorem of invariant theory. 

Propositions \ref{semiiso} and Proposition \ref{Ksemiiso} imply the more elaborate conclusion that $V_{\tree_0}(a, b)$ is a Rees algebra of $R_{\tree_0}(a, b)$.  To prove this, we note that each $\C[Q_i]$ is a Rees algebra of $\C[L_i]$, so that $\C[\prod Q_i]$ is a subalgebra of $\C[\prod (L_i \times \Z_{\geq 0})].$ The $\Z_{\geq 0}$ components of the latter semigroup correspond to the level parameters in the $Q_i$, so we may extend the boundary maps $\del_1, \del_2$ on the $L_i$ to include these components.  The toric fiber product operation has the effect of forcing these new parameters to be equal, which produces an inclusion $\C[Q(a,b)] \subset \C[L(a, b)\times \Z_{\geq 0}].$  On the level of semigroups, this identifies $L(a, b) \times \Z_{\geq 0}$ with the $\Z_{\geq 0}$ parameter $\leq K$ with the elements of $Q(a, b)$ with level $\leq K$.

\section{Interlacing patterns}\label{interlacing}

We can now realize $Q(a, b)$ and $L(a, b)$ as affine semigroups of non-negative integer weightings of an interlacing pattern.  This leads us to a proof that $R_{\tree_0}(a, b)$, $V_{\tree_0}(a, b)$, and therefore $V_{\mathbb{P}, \vec{p}}(a, b)$ are Gorenstein algebras.  We also give explicit presentations of $V_{\tree_0}(a, b)$ and $R_{\tree_0}(a, b)$.  In this discussion we leave out the cases $a = 0$ or $b = 0$, these can be handled by the same methods and we leave the details to the reader. 

\subsection{$Q(a, b)$ and $L(a, b)$ as semigroups of interlacing patterns}

Proposition \ref{kpieri} shows that the $K$-Pieri semigroups can be represented as the non-negative integer weightings of a two row interlacing pattern.  The bottom row of this pattern has $m-1$ entries, and the top row has $m$ entries, plus $1$ if we include the level parameter.   The semigroup $Q(a, b)$ can also be represented as a set of weightings of an interlacing pattern, to show this we will require a more flexible representation of the spaces $V_{0, 3}(\lambda, r\mathbf{\omega}_1, \eta, K)$ and $V_{0, 3}(\lambda, s\mathbf{\omega}_{m-1}, \eta, K).$

\begin{lemma}\label{mmkpieri}
The following data are equivalent:

\begin{enumerate}
\item A pair of interlacing $\mathrm{GL}_m(\C)$ weights $\eta \prec \lambda$, with $K \geq \lambda_1 - \eta_m$ and $\sum \lambda_i - \sum \eta_j = r$.\\
\item $V_{0, 3}((\lambda- \lambda_m\mathbf{\omega}_m)^*, r\mathbf{\omega}_1, (\eta - \eta_m\mathbf{\omega}_m), K) \neq 0,$ and a choice $\eta_m \in \Z$.
\end{enumerate}
\end{lemma}

\begin{proof}
Let $\bold{e}$ be the interlacing pattern of two length $m$ rows of $1$'s.  Given an interlacing pattern $\mathbf{b}$ representing the information in $(2)$, we add $\eta_m\mathbf{e}$ to obtain the information in $(1)$. This argument can be reversed. 
\end{proof}

We illustrate why Lemma \ref{mmkpieri} is necessary with a construction.  Let us fix two non-zero spaces of conformal blocks, $V_{0, 3}(\lambda^*, r_1\mathbf{\omega}_1, \eta, K)$, $V_{0, 3}(\eta^*, r_2\mathbf{\omega}_1, \mu, K)$ and a non-zero integer $\mu_m$, and build an interlacing pattern which represents the $1$-dimensional tensor product space $V_{0, 3}(\lambda^*, r_1\mathbf{\omega}_1, \eta, K)\otimes V_{0, 3}(\eta^*, r_2\mathbf{\omega}_1, \mu, K).$  We use the procedure  in the proof of Lemma \ref{mmkpieri} with input $\bar{\mu}_m$, $\eta$, $r_2$, $\mu$, and $K$ to obtain an interlacing pattern $\mathbf{b}_1$ with upper row $\bar{\eta} = \eta + [\bar{\mu}_m + \frac{1}{m}(r + \sum \mu_i - \sum \eta_j)]\mathbf{\omega}_m$.   Then we can carry out this same procedure with input $\bar{\eta}_m$, $\lambda$, $r_1$, $\eta$, and $K$ to obtain $\mathbf{b}_2$.  Since the lower row of $\mathbf{b}_2$ matches the upper row of $\mathbf{b}_1$, these two patterns can be glued together, this is shown in Figure \ref{cpattern}.   Given such a $3$-row pattern $\mathbf{b}$ with $\bar{\lambda}_1 - \bar{\eta}_m, \bar{\eta}_1 - \bar{\mu}_m \leq K$, we can remove multiples of $\mathbf{e}$ from the first and second, respectively second and third pairs of rows to obtain a pair of interlacing diagrams $\mathbf{b}_1, \mathbf{b}_2$ which satisfy the hypotheses of Proposition \ref{kpieri}.  This construction is easily generalized to $N$-row diagrams.

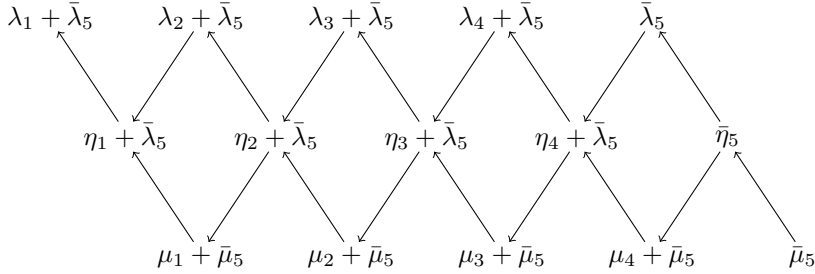
\begin{figure}[htbp]
\begin{tikzpicture}
%The composite interlacing pattern, bottom line of arrows.
\draw [->] (0,0) -- (-.8, 1.2);
\draw [->] (-1,1.2) -- (-1.8, 0);
\draw [->] (-2, 0) -- (-2.8, 1.2);
\draw [->] (-3,1.2) -- (-3.8, 0);
\draw [->] (-4, 0) -- (-4.8, 1.2);
\draw [->] (-5,1.2) -- (-5.8, 0);
\draw [->] (-6, 0) -- (-6.8, 1.2);
\draw [->] (-7,1.2) -- (-7.8, 0);
\draw [->] (-8, 0) -- (-8.8, 1.2);
%The composite interlacing pattern, top line of arrows.
\draw [->] (-1,1.6) -- (-1.8, 2.8);
\draw [->] (-2,2.8) -- (-2.8, 1.6);
\draw [->] (-3,1.6) -- (-3.8, 2.8);
\draw [->] (-4,2.8) -- (-4.8, 1.6);
\draw [->] (-5,1.6) -- (-5.8, 2.8);
\draw [->] (-6,2.8) -- (-6.8, 1.6);
\draw [->] (-7,1.6) -- (-7.8, 2.8);
\draw [->] (-8,2.8) -- (-8.8, 1.6);
\draw [->] (-9,1.6) -- (-9.8, 2.8);
%bottom row, right to left
\node at (.1, -.2) {$\bar{\mu}_5$};
\node at (-1.9, -.2) {$\mu_4 + \bar{\mu}_5$};
\node at (-3.9, -.2) {$\mu_3 + \bar{\mu}_5$};
\node at (-5.9, -.2) {$\mu_2 + \bar{\mu}_5$};
\node at (-7.9, -.2) {$\mu_1 + \bar{\mu}_5$};
%middle row, right to left
\node at (-.9, 1.4) {$\bar{\eta}_5$};
\node at (-2.9, 1.4) {$\eta_4 + \bar{\lambda}_5$};
\node at (-4.9, 1.4) {$\eta_3 + \bar{\lambda}_5$};
\node at (-6.9, 1.4) {$\eta_2 + \bar{\lambda}_5$};
\node at (-8.9, 1.4) {$\eta_1 + \bar{\lambda}_5$};
%top row, right to left
\node at (-1.9, 3) {$\bar{\lambda}_5$};
\node at (-3.9, 3) {$\lambda_4 + \bar{\lambda}_5$};
\node at (-5.9, 3) {$\lambda_3 + \bar{\lambda}_5$};
\node at (-7.9, 3) {$\lambda_2 + \bar{\lambda}_5$};
\node at (-9.9, 3) {$\lambda_1 + \bar{\lambda}_5$};
%formula in figure
\node at (-9, 4) {$\bar{\eta}_5 = \bar{\mu}_5 + \frac{1}{5}(r_2 + \sum \mu_i - \sum \eta_j)$};
\node at (-3, 4){$\bar{\lambda}_5 = \bar{\eta}_5 + \frac{1}{5}(r_1 + \sum \eta_i - \sum \lambda_j)$};
\end{tikzpicture}
\label{cpattern}
\caption{A composite interlacing pattern.}
\end{figure}

\begin{lemma}
The following data are equivalent:

\begin{enumerate}\label{comp1}
\item $N$ interlacing $\mathrm{GL}_m(\C)$ weights $\lambda^1 \prec \cdots \prec \lambda^N$ with $K \geq \lambda^k_1 - \lambda^{k-1}_m$ and $\sum \lambda^k_i - \sum \lambda^{k-1})j = r_{k-1}.$\\
\item $\bigotimes_{k = 1}^{N-1} V_{0, 3}((\lambda^{k+1} - \lambda^{k+1}_m\mathbf{\omega}_m)^*, r_k\mathbf{\omega}_1,\lambda^{k} - \lambda^{k}_m\mathbf{\omega}_m, K) \neq 0$, and a choice $\lambda^1_m \in \Z.$\\ 
\end{enumerate}
\end{lemma}

 Proposition \ref{dualkpieri}, along with the same proof used for Lemma \ref{mmkpieri} give the corresponding construction for dual spaces of conformal blocks.

\begin{lemma}\label{comp2}
The following data are equivalent:

\begin{enumerate}
\item $N$ interlacing $\mathrm{GL}_m(\C)$ weights $\eta^1 \prec \cdots \prec \eta^M$ with $K \geq \eta^k_1 - \eta^{k-1}_m$ and $\sum \eta^k_i - \sum \eta^{k-1})j = s_{k-1}.$\\
\item $\bigotimes_{k = 1}^{M-1} V_{0, 3}((\eta^{k} - \eta^{k}_m\mathbf{\omega}_m)^*, s_k\mathbf{\omega}_{m-1},\eta^{k+1} - \eta^{k+1}_m\mathbf{\omega}_m, K) \neq 0$, and a choice $\eta^1_m \in \Z.$\\ 
\end{enumerate}

\end{lemma}

\noindent
When $\eta^1$ from Lemma \ref{comp2} equals $\lambda^1$ from Lemma \ref{comp1}, we can arrange the weights in a ``wedge'' diagram as in Figure \ref{wedge}.

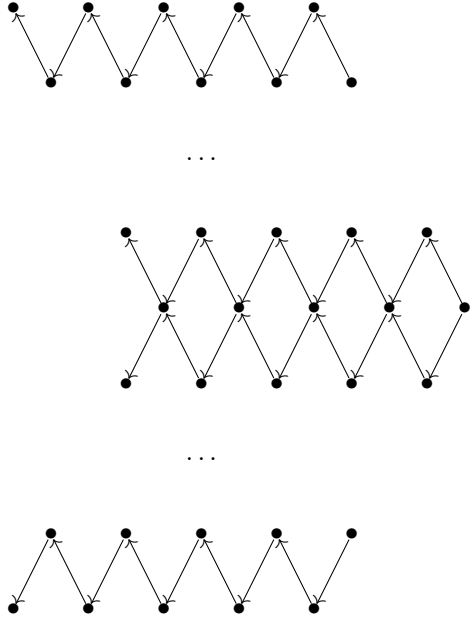
\begin{figure}[htbp]

$$
\begin{xy}
(0, 0)*{\bullet} = "A0";
(10, 0)*{\bullet} = "A1";
(20,0)*{\bullet} = "A2";
(30,0)*{\bullet} = "A3"; 
(40,0)*{\bullet} = "A4"; 
(5,-10)*{\bullet} = "B0";
(15,-10)*{\bullet} = "B1";
(25, -10)*{\bullet} = "B2"; 
(35, -10)*{\bullet} = "B3";
(45, -10)*{\bullet} = "B4";
"A0"; "B0";**\dir{-}? >* \dir{>};
"B0"; "A1";**\dir{-}? >* \dir{>};
"A4"; "B4";**\dir{-}? >* \dir{>};
"B3"; "A4";**\dir{-}? >* \dir{>};
"A3"; "B3";**\dir{-}? >* \dir{>};
"B2"; "A3";**\dir{-}? >* \dir{>};
"A2"; "B2";**\dir{-}? >* \dir{>};
"B1"; "A2";**\dir{-}? >* \dir{>};
"A1"; "B1";**\dir{-}? >* \dir{>};
(25, -20)*{\ldots};
(15, -30)*{\bullet} = "x0";
(25, -30)*{\bullet} = "x1";
(35, -30)*{\bullet} = "x2";
(45, -30)*{\bullet} = "x3"; 
(55, -30)*{\bullet} = "x4"; 
(20,-40)*{\bullet} = "y0";
(30,-40)*{\bullet} = "y1";
(40, -40)*{\bullet} = "y2"; 
(50, -40)*{\bullet} = "y3";
(60, -40)*{\bullet} = "y4";
(15, -50)*{\bullet} = "z0";
(25, -50)*{\bullet} = "z1";
(35, -50)*{\bullet} = "z2";
(45, -50)*{\bullet} = "z3"; 
(55, -50)*{\bullet} = "z4"; 
"x0"; "y0";**\dir{-}? >* \dir{>};
"y0"; "x1";**\dir{-}? >* \dir{>};
"x4"; "y4";**\dir{-}? >* \dir{>};
"y3"; "x4";**\dir{-}? >* \dir{>};
"x3"; "y3";**\dir{-}? >* \dir{>};
"y2"; "x3";**\dir{-}? >* \dir{>};
"x2"; "y2";**\dir{-}? >* \dir{>};
"y1"; "x2";**\dir{-}? >* \dir{>};
"x1"; "y1";**\dir{-}? >* \dir{>};
"z0"; "y0";**\dir{-}? >* \dir{>};
"y0"; "z1";**\dir{-}? >* \dir{>};
"z4"; "y4";**\dir{-}? >* \dir{>};
"y3"; "z4";**\dir{-}? >* \dir{>};
"z3"; "y3";**\dir{-}? >* \dir{>};
"y2"; "z3";**\dir{-}? >* \dir{>};
"z2"; "y2";**\dir{-}? >* \dir{>};
"y1"; "z2";**\dir{-}? >* \dir{>};
"z1"; "y1";**\dir{-}? >* \dir{>};
(25, -60)*{\ldots};
(5,-70)*{\bullet} = "a0";
(15,-70)*{\bullet} = "a1";
(25, -70)*{\bullet} = "a2"; 
(35, -70)*{\bullet} = "a3";
(45, -70)*{\bullet} = "a4";
(0, -80)*{\bullet} = "b0";
(10, -80)*{\bullet} = "b1";
(20,-80)*{\bullet} = "b2";
(30,-80)*{\bullet} = "b3"; 
(40,-80)*{\bullet} = "b4"; 
"b0"; "a0";**\dir{-}? >* \dir{>};
"a0"; "b1";**\dir{-}? >* \dir{>};
"b4"; "a4";**\dir{-}? >* \dir{>};
"a3"; "b4";**\dir{-}? >* \dir{>};
"b3"; "a3";**\dir{-}? >* \dir{>};
"a2"; "b3";**\dir{-}? >* \dir{>};
"b2"; "a2";**\dir{-}? >* \dir{>};
"a1"; "b2";**\dir{-}? >* \dir{>};
"b1"; "a1";**\dir{-}? >* \dir{>};
\end{xy}
$$\\
\caption{The diagram $\mathbf{W}_m(a, b).$}
\label{wedge}
\end{figure}

 We let $\mathbf{W}_m(a, b)$ be the version of the diagram from Figure \ref{wedge} with $m$ entries to a row, $a-1$ rows above the change in direction, and $b-1$ rows after.  Lemmas \ref{comp1} and \ref{comp2} then imply that the fillings of this diagram with $K \geq \eta^k_1 - \eta^{k-1}_m, \lambda^k_1 - \lambda^{k-1}_m$ and $\eta^1_m = \lambda^1_m = 0$ are in bijection with the level $K$ piece of the graded semigroup $\bar{Q}(a, b) = Q(B, P, B)\times_{\Lambda_m}\cdots \times_{\Lambda_m} Q(B, P, B)\times_{\Lambda_m} Q(B, P^*, B) \times_{\Lambda_m}\cdots \times_{\Lambda_m} Q(B, P^*, B).$  This is the affine semigroup $Q(a, b)$ without the semigroups $Q(P, P, B)$ and $Q(B, P^*, P^*)$ at the ends of the fiber product.

Let $V_{0, 3}(r_1\mathbf{\omega}_1, r_2\mathbf{\omega}_1, \eta, K) \neq 0$, and let $\mathbf{b}$ be the associated interlacing pattern from Proposition \ref{kpieri}.  The top row of $\mathbf{b}$ is $r_1\mathbf{\omega}_{m-1} + \frac{1}{m}(r_2 + \sum \eta_i -(m-1)r_1)\mathbf{\omega}_m$, in particular the first $m-1$ entries are equal.   Proposition \ref{dualkpieri} implies an almost identical description holds for an interlacing pattern $\mathbf{b}'$ which represents a non-zero space $V_{0, 3}(\lambda, s_1\mathbf{\omega}_{m-1}, s_2\mathbf{\omega}_{m-1}, K)$, in particular the first $m-1$ entries of the bottom row of this diagram are all equal.

Now we can give a description of $Q(a, b)$ as an affine semigroup of fillings of an interlacing pattern.   An admissable weighting of $\mathbf{W}_m(a, b)$ of level $K$ is two choices of sets of interlacing weights $\lambda^1 \prec \cdots \prec \lambda^a$, $\eta^1 \prec \cdots \prec \eta^b$ with $\lambda^1 = \eta^1$, $\lambda^1_m = \eta^1_m = 0$, $\lambda^a_1 = \cdots = \lambda^a_{m-1}$, $\eta^b_1 = \cdots = \eta^b_{m-1},$ and $\lambda^i_1 - \lambda^{i-1}_m$, $\eta^j_1 - \eta^{j-1}_m \leq K.$

\begin{proposition}\label{interlace1}
The elements of $Q(a, b)$ of level $K$ are in bijection with the admissable weightings of $\mathbf{W}_m(a, b)$ of level $K$, and the product operation on $Q(a, b)$ is entry-wise addition of admissable weightings.  
\end{proposition}

\begin{proof}
By the remarks above, Lemmas \ref{comp1} and \ref{comp2} imply that the graded affine semigroup of admissable weightings is a subsemigroup of $\bar{Q}(a, b)$.  The boundary conditions  $\lambda^a_1 = \cdots = \lambda^a_{m-1}$, $\eta^b_1 = \cdots = \eta^b_{m-1},$ then imply that this is precisely the subsemigroup of those weightings with top boundary weight a multiple of $\mathbf{\omega}_1$ and bottom boundary weight a multiple of $\mathbf{\omega}_{m-1}$, this is $Q(a, b).$ 
\end{proof}

Proposition \ref{interlace1} shows that the degree $K = 1$ elements in $Q(a, b)$ are the lattice points in a polytope $\mathcal{W}_b(a, b)$ defned by the interlacing and level inequalities.  This polytope is normal by Proposition \ref{kos}, and by \cite[Corollaries 8.4 and 8.9]{St} it has a unimodular triangulation. In particular, Proposition \ref{interlace1} implies that the semigroup $Q(a, b)$ is isomorphic to the affine semigroup of of integer points in the cone over $\mathcal{W}_m(a, b) \times \{1\} \subset \R_{\geq 0}^{m(a + b -1)}\times \R_{\geq 0}.$  The same arguments in this section, omitting any mention of the level, also prove the following proposition, we leave the details to the reader. 

\begin{proposition}
The affine semigroup $L(a, b)$ is isomorphic to the affine semigroup of admissable weightings of $\mathbf{W}_m(a, b)$. 
\end{proposition}

Now we use Proposition \ref{interlace1} to show that $V_{\tree_0}(a, b)$ is a Gorenstein algebra.  Both $V_{\tree_0}(a, b)$ and $V_{\mathbb{P}^1, \vec{p}}(a, b)$ are graded domains, and these algebras share the same Hilbert function, so by \cite[Theorem 6.1]{stanley78} this shows that $V_{\mathbb{P}^1, \vec{p}}(a, b)$ is Gorenstein as well.

\begin{proposition}\label{interlacegor}
The affine semigroup algebra $\C[Q(a, b)] \cong V_{\tree_0}(a, b)$ is Gorenstein.  If $a, b \geq m$, the $A$-invariant is equal to $-2m$.
\end{proposition}

\begin{proof}
Following Proposition \ref{gor}, we must show that there is an interlacing pattern $\mathbf{w}$ such that for any interior interlacing pattern $\mathbf{b} \in int(Q(a, b))$ we can write $\mathbf{b} = \mathbf{w} + \mathbf{b}'$ for some $\mathbf{b}' \in Q(a, b).$ We must first describe the set $int(Q(a, b))$.
The inequalities which define $Q(a, b)$ come in three families: all entries except $\lambda^1_m = 0$ are non-negative, successive rows interlace, and the level inequalities $\lambda^i_1 - \lambda^{i-1}_m$, $\eta^i_1 - \eta^{i-1}_m \leq K$ must hold.  Those inequalities describe $Q'(a, b)$, the subsemigroup $Q(a, b)$ is then cut out by the equalities $\lambda^a_1 = \cdots = \lambda^a_{m -1}$ and $\eta^b_1 = \cdots = \eta^b_{m-1}$.  Interlacing then imposes further equalities:  $\lambda^a_1 = \lambda^{a-j}_1 = \cdots = \lambda^{a-j}_{m - j -1}$ and $\eta^b_1 = \eta^{b -j}_1 = \cdots = \eta^{b-j}_{m- j}$ for $1 \leq j \leq m-1.$  An example of the pattern induced by these inequalities is shown in Figure \ref{tripattern}.   The entries subject to these equalities always form two (possibly overlapping) height $m-1$ triangles in the interlacing pattern, we refer to them as the entries of the upper, respectively lower equality triangles from now on.

\begin{figure}[htbp]
\begin{tikzpicture}
%The composite interlacing pattern,first line of arrows.
\draw [->] (0,0) -- (-.8, 1.2);
\draw [->] (-1,1.2) -- (-1.8, 0);
\draw [->] (-2, 0) -- (-2.8, 1.2);
\draw [->] (-3,1.2) -- (-3.8, 0);
\draw [->] (-4, 0) -- (-4.8, 1.2);
\draw [->] (-5,1.2) -- (-5.8, 0);
\draw [->] (-6, 0) -- (-6.8, 1.2);
\draw [->] (-7,1.2) -- (-7.8, 0);
\draw [->] (-8, 0) -- (-8.8, 1.2);
%The composite interlacing pattern, second line of arrows.
\draw [->] (-1,1.6) -- (-1.8, 2.8);
\draw [->] (-2,2.8) -- (-2.8, 1.6);
\draw [->] (-3,1.6) -- (-3.8, 2.8);
\draw [->] (-4,2.8) -- (-4.8, 1.6);
\draw [->] (-5,1.6) -- (-5.8, 2.8);
\draw [->] (-6,2.8) -- (-6.8, 1.6);
\draw [->] (-7,1.6) -- (-7.8, 2.8);
\draw [->] (-8,2.8) -- (-8.8, 1.6);
\draw [->] (-9,1.6) -- (-9.8, 2.8);
%The composite interlacing pattern, third line of arrows.
\draw [->] (-2,3.2) -- (-2.8, 4.4);
\draw [->] (-3,4.4) -- (-3.8, 3.2);
\draw [->] (-4,3.2) -- (-4.8, 4.4);
\draw [->] (-5,4.4) -- (-5.8, 3.2);
\draw [->] (-6,3.2) -- (-6.8, 4.4);
\draw [->] (-7,4.4) -- (-7.8, 3.2);
\draw [->] (-8,3.2) -- (-8.8, 4.4);
\draw [->] (-9,4.4) -- (-9.8, 3.2);
\draw [->] (-10,3.2) -- (-10.8, 4.4);
%first row, right to left
\node at (.1, -.2) {$0$};
\node at (-1.9, -.2) {$2$};
\node at (-3.9, -.2) {$4$};
\node at (-5.9, -.2) {$6$};
\node at (-7.9, -.2) {$7$};
%2nd row, right to left
\node at (-.9, 1.4) {$1$};
\node at (-2.9, 1.4) {$3$};
\node at (-4.9, 1.4) {$5$};
\node at (-6.9, 1.4) {$7$};
\node at (-8.9, 1.4) {$7$};
%third row, right to left
\node at (-1.9, 3) {$2$};
\node at (-3.9, 3) {$4$};
\node at (-5.9, 3) {$7$};
\node at (-7.9, 3) {$7$};
\node at (-9.9, 3) {$7$};
%fourth row, right to left
\node at (-2.9, 4.6) {$3$};
\node at (-4.9, 4.6) {$7$};
\node at (-6.9, 4.6) {$7$};
\node at (-8.9, 4.6) {$7$};
\node at (-10.9, 4.6) {$7$};
%formula in figure
%\node at (-9, 4) {$\bar{\eta}_5 = \bar{\mu}_5 + \frac{1}{5}(r_2 + \sum \mu_i - \sum \eta_j)$};
%\node at (-3, 4){$\bar{\lambda}_5 = \bar{\eta}_5 + \frac{1}{5}(r_1 + \sum \eta_i - \sum \lambda_j)$};
\end{tikzpicture}
\label{tripattern}
\caption{The entries on the left are forced to be equal by the interlacing inequalities.}
\end{figure}
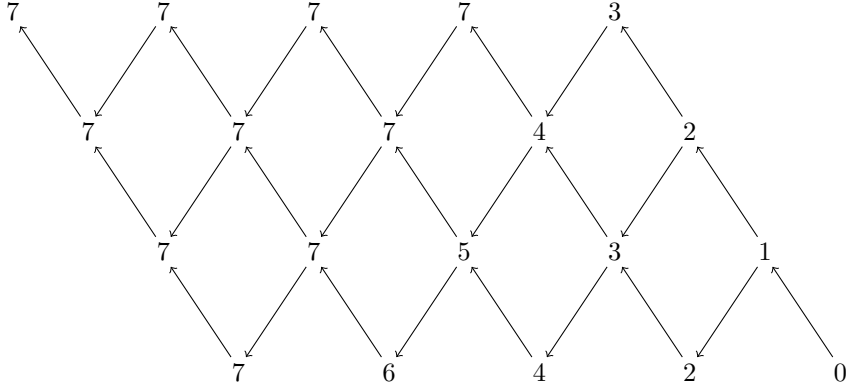

We show that the interior $int(Q(a, b))$ is given by those points which make all the above-mentioned inequalities strict, this assertion follows from finding one such point.   We define this point $\mathbf{w}$ by first considering $\mathbf{v} \in \bar{Q}(a, b)$ defined by setting $\lambda^1_m = \eta^1_m = 0$, and declaring each interlacing difference $\lambda^j_i - \lambda^{j-1}_i$, $\lambda^{j-1}_i - \lambda^{j-1}_{i+1}$, $\eta^j_i - \eta^{j-1}_i$, $\eta^{j-1}_i - \eta^{j-1}_{i+1}$ to be $1.$  The level of $\mathbf{v}$ is set equal to $2m$, as this is precisely $1$ greater than any difference $\lambda^j_1 - \lambda^{j-1}_m$. The point $\mathbf{v}$ must be in the interior of $\bar{Q}(a, b)$ as it strictly satisfies all of the defining inequalities of this affine semigroup. Furthermore, any point $\mathbf{b} \in int(\bar{Q}(a, b))$ must have all interlacing differences at least $1,$ and therefore level at least $2m$, this implies that $\mathbf{b} = \mathbf{v} + \mathbf{b}'$ for some $\mathbf{b}' \in \bar{Q}(a, b).$  With this case as a warm-up, we define $\mathbf{w}$ to have the same entries as $\mathbf{v}$, except with the equality triangle conditions imposed.  All entries in one of these triangles are set to be $1$ bigger than the largest entry with an arrow pointing ``in'' to the triangle (once again, see Figure \ref{tripattern}).  The level of $\mathbf{w}$ is chosen to be $1$ greater than the maximal difference $\lambda^j_1 - \lambda^{j-1}_m$.  

First we claim that $\mathbf{w} \in int(Q(a, b))$.  Because of our choices, checking the strictness of all inequalities reduces to only checking those interlacing inequalities involving an entry $x$ outside an equality triangle, and an entry $\tau$ inside an equality triangle. By the way we've chosen $\mathbf{w}$,  if there is an arrow from $x$ to $\tau$, $x - \tau \geq 1$.  If the arrow points from $\tau$ to $x$ (this happens if $a$ or $b$ is strictly less than $m-1$), since the triangle has base length $m-1$, and the length of each row is $m$, we can always find a directed chain of arrows from some entry with an arrow pointing into the equality triangle to $x$, so once again $x - \tau \geq 1$.  Now if $\mathbf{b} \in int(Q(a, b)),$ each difference we have mentioned above is $\geq 1$.    Since $\mathbf{w}$ is determined by forcing these differences to be $1$, the entries of $\mathbf{w}$ and the level must be less than those of $\mathbf{b}$, and the corresponding differences for the pattern $\mathbf{b} - \mathbf{w}$ must be $\geq 0$, so $\mathbf{b} - \mathbf{w} \in Q(a, b).$  

We close the proof by determining the $A$-invariant in the case $a, b \geq m$. In this case the $\lambda^1 = \eta^1 = (2(m-1), \ldots, 2, 0)$ and $\lambda^2 = (2m-1, 2m-3, \ldots, 1)$, so the level of $\mathbf{w}$ is $K = 2m.$
\end{proof}

\begin{remark}\label{2gen}
The program we have carried out for $Q(a, b)$ goes through with little modification for $\bar{Q}(a, b)$. Since the affine semigroup algebra $\C[\bar{Q}(a, b)]$ is a degeneration of $V_{\mathbb{P}^1, \vec{p}}(B, \vec{P}, \vec{P}^*, B)$, Theorem \ref{main} holds for the total coordinate ring of $\mathcal{M}_{\mathbb{P}^1, \vec{p}}(B, \vec{P}, \vec{P}^*, B)$ as well. 
\end{remark}

\subsection{Presentations of $V_{\tree_0}(a, b)$ and $R_{\tree_0}(a, b)$}

Proposition \ref{kos} implies that $Q(a, b)$ is generated by tuples $(\mathbf{w}_1, \ldots, \mathbf{w}_{a + b -1})$ of degree $1$ generators of the $K$-Pieri semigroups with matching boundary values.  The generators of $L(a, b)$ are similar, except $\mathbf{0}$ is allowed to appear.   By Propositions \ref{4algebrasclassical} and \ref{4algebrasquantum}, these generators are the interlacing patterns filled with $1$'s and $0$'s.  In $Q(a, b)$, each of the patterns has $K = 1$, and there is an additional empty pattern of level $1$, not to be confused with the identity $\mathbf{0}$. We introduce a new notation $[i, j]$ for these generators, this indicates a pattern with top (bottom) rows consisting of $i$ ($j$) $1$'s respectively, see  Figure \ref{pieripattern2}.

\begin{figure}[htbp]
\begin{tikzpicture}
%The top interlacing pattern 
%3 zigzags
\draw [->] (0,3) -- (-.8, 4.2);
\draw [->] (-1,4.2) -- (-1.8, 3);
\draw [->] (-2, 3) -- (-2.8, 4.2);
%3 zigzags
\draw [->] (-4, 3) -- (-4.8, 4.2);
\draw [->] (-5,4.2) -- (-5.8, 3);
\draw [->] (-6, 3) -- (-6.8, 4.2);
%3 zigzags
\draw [->] (-8, 3) -- (-8.8, 4.2);
\draw [->] (-9, 4.2) -- (-9.8, 3);
\draw [->] (-10, 3) -- (-10.8, 4.2);
%first zigzag bottom
\node at (.1, 3-.2) {$0$};
\node at (-1.9, 3-.2) {$0$};
%first zigzag top
\node at (-.9, 4.4) {$0$};
\node at (-2.9, 4.4) {$0$};
%second zigzag bottom
\node at (-3.9, 3-.2) {$0$};
\node at (-5.9, 3-.2) {$1$};
%second zigzag top
\node at (-4.9, 4.4) {$0$};
\node at (-6.9, 4.4) {$1$};
%third zigzag bottom
\node at (-7.9, 3-.2) {$1$};
\node at (-9.9, 3-.2) {$1$};
%third zigzag top
\node at (-8.9, 4.4) {$1$};
\node at (-10.9, 4.4) {$1$};
%ellipses
\node at (-3.4, 3.6) {$\cdots$};
\node at (-7.4, 3.6) {$\cdots$};
%The bottom interlacing pattern 
%3 zigzags
\draw [->] (0,0) -- (-.8, 1.2);
\draw [->] (-1,1.2) -- (-1.8, 0);
\draw [->] (-2, 0) -- (-2.8, 1.2);
%3 zigzags
\draw [->] (-4,0) -- (-4.8, 1.2);
\draw [->] (-5,1.2) -- (-5.8, 0);
\draw [->] (-6, 0) -- (-6.8, 1.2);
%3 zigzags
\draw [->] (-8,0) -- (-8.8, 1.2);
\draw [->] (-9,1.2) -- (-9.8, 0);
\draw [->] (-10, 0) -- (-10.8, 1.2);
%first zigzag bottom
\node at (.1, -.2) {$0$};
\node at (-1.9, -.2) {$0$};
%first zigzag top
\node at (-.9, 1.4) {$0$};
\node at (-2.9, 1.4) {$0$};
%second zigzag bottom
\node at (-3.9, -.2) {$0$};
\node at (-5.9, -.2) {$1$};
%second zigzag top
\node at (-4.9, 1.4) {$0$};
\node at (-6.9, 1.4) {$1$};
%third zigzag bottom
\node at (-7.9, -.2) {$1$};
\node at (-9.9, -.2) {$1$};
%third zigzag top
\node at (-8.9, 1.4) {$1$};
\node at (-10.9, 1.4) {$1$};
%ellipses
\node at (-3.4, .6) {$\cdots$};
\node at (-7.4, .6) {$\cdots$};
\end{tikzpicture}
\caption{The elements $[i, i]$ (top) and $[i+1, i]$ (bottom), $1 \leq i \leq m-1$}
\label{pieripattern2}
\end{figure}
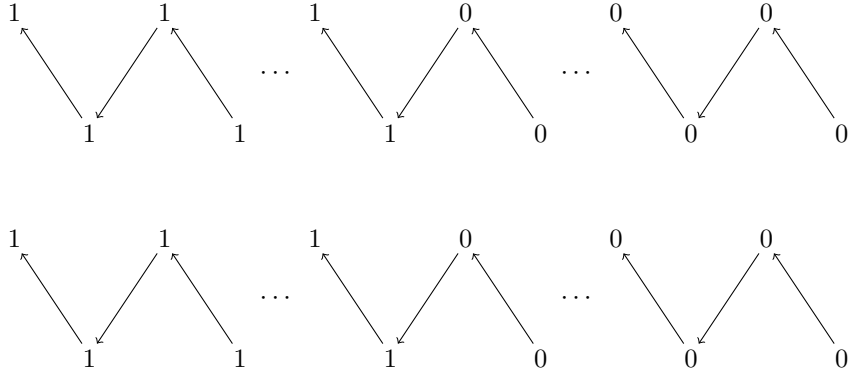

\noindent
For any generator $[i, j],$ the difference $| i - j |$ is $1$ or $0$, and a row of all $1$'s is equivalent to a row of $0$'s under $\del_1, \del_2$, so we consider $i, j$ modulo $m$.  This implies that a generating set of $Q(a, b)$ and $L(a, b)$ is given by a set of $a +b -1$ tuples of elements from $\Z/m\Z.$

\begin{definition}
Let $X_{a, b}$ be the set of $a + b -1$-tuples $[i_1, \ldots, i_{a + b -1}]$,  $i_j \in \Z/m\Z$ with the following properties:

\begin{enumerate}
\item The difference $i_{k} - i_{k+1}$ is in $\{1, 0\}$ for $1 \leq k \leq a-1$.\\
\item The difference $i_{k} - i_{k+1}$ is in $\{-1, 0\}$ for $a \leq k \leq a + b -2$.\\
\item The first and last entries are members of $\{m-1, m\}.$\\
\end{enumerate}

Let $Y_{a, b} \subset X_{a, b}$ be the subset of tuples consisting of at most one string of non-zero entries. 
\end{definition}

A swap relation on the set $X_{a, b}$ can be formed whenever a pair of elements of this set share a common entry:

\begin{equation}
[\vec{i}_1, i, \vec{i_2}][\vec{i_1}^*, i, \vec{i_2}^*] - [\vec{i}_1, i, \vec{i_2}^*][\vec{i_1}^*, i, \vec{i_2}]\\
\end{equation}

\noindent
The next theorem is a direct consequence of Proposition \ref{kos}. 

\begin{theorem}\label{pres}
The semigroup algebra $V_{\tree_0}(a, b) \cong \C[Q(a, b)]$ can be presented by the generators $X_{a, b}$, subject to all possible swap relations. The semigroup algebra $R_{\tree_0}(a, b) \cong \C[L(a, b)]$ can be presented by the generators $Y_{a, b}$, subject to all possible swap relations. 
\end{theorem}

\begin{proof}
A-priori the $X_{a, b}$ generates $L(a, b)$ as well, so it only remains to show that the set $Y_{a, b}$ suffices.  This is the case because we may use the swap relations which hold when two generators share a $0$ entry to reduce an element of $X_{a, b}$ to a product of elements of $Y_{a, b}.$ 

\begin{equation}
[\vec{i}_1, 0, \vec{i}_2][0, \ldots, 0, \ldots, 0] = [\vec{i}_1, 0, \ldots, 0][0, \ldots, 0, \vec{i}_2]\\
\end{equation}

\end{proof}

\subsection{Comparison of presentations}

The algebra $R_{\tree_0}(a, b)$ is an associated graded algebra of $R(a, b)$, this implies that our presentation of $R_{\tree_0}(a, b)$ can be lifted to $R(a, b).$   We close by comparing the generating set $Y_{a, b}$ to the generators given by the first fundamental theorem of invariant theory.  We view the algebra $\C[[\bigwedge^{m-1}(\C^m)]^a \times [\C^m]^b]$ as a polynomial ring on $m \times (a +  b)$ variables arranged in a matrix, with the columns labelled by elements of $[a + b] = [a] \coprod [b]$. 

\[ \begin{array}{cccccc}
x_{11} & \ldots & x_{a1} & y_{11} & \ldots & y_{b1}\\
x_{12} & \ldots & x_{a2} & y_{12} & \ldots & y_{b2}\\
\ldots & \ldots &  \ldots & \ldots & \ldots & \ldots \\
x_{1m} & \ldots & x_{am} & y_{1m} & \ldots & y_{bm}\end{array}\] 

 We let $\Delta_I$ be the determinant form on the variables determined by a subset $I \subset [a]$ in $[\bigwedge^{m-1}(\C^m)]^a$ of size $m$, and $\Delta_J$ be the dual determinant in $[\C^m]^b,$ for $J \subset [b].$  We let $P_{ij}$ be the column-wise inner product of the variables on the indices $i \in [a]$ with those in $j \in [b].$ Each of the elements $\Delta_I, \Delta_J, P_{ij}$ is an $\mathrm{SL}_m(\C)$ invariant, and therefore a member of $R(a, b)$.  Weyl described a collection of quadratic relations on these elements, known as the  Pl\"ucker relations. The following is the first fundamental theorem:

\begin{theorem}\label{ffti}
The algebra $R(a, b)$ is generated by the $\Delta_I, \Delta_J, P_{ij}$, and all relations
among these generators are generated by the Pl\"ucker relations. 
\end{theorem}

By reading off the fundamental weights of each of the generators $\Delta_I$, $\Delta_J$, and $P_{ij}$ from Weyl's presentation, we can associate minimal generators in the degeneration $R_{\tree_0}(a, b),$ as follows.   For the set $I \subset [a],$ we let $t_I$ be the tuple with $t_{i+1} - t_{i} = 1$ exactly for $i \in I,$ and we define $t_J$ for $J \subset [b]$ similarly.  We let $t_{ij}$ be the tuple with $t_{i+1} - t_{i} = 1$, $t_{j+1} - t_j = -1$,and all other differences equal to $0$. These elements constitute a proper subset of $Y_{a, b}$.

Theorem \ref{pres} implies that the induced presentation of $R(a, b)$ by the set $Y_{a, b}$ has quadratic relations, similar to the presentation given by the first fundamental theorem.  The latter requires fewer generators, however their associated elements fail to generate $R_{\tree_0}(a, b)$.  Said more concisely, the set $Y_{a, b}$ constitutes a SAGBI basis of the algebra $R(a, b)$ with respect to the degeneration to the semigroup algebra $R_{\tree_0}(a, b)$.

\bibliographystyle{alpha}
\bibliography{Biblio}

\bigskip
\noindent
Christopher Manon:\\
Department of Mathematics,\\ 
George Mason University,\\ 
Fairfax, VA 22030 USA

\end{document}